\newtheorem{theorem}{Theorem}[section]
\newtheorem{lemma}[theorem]{Lemma}
\DeclareMathOperator*{\argmin}{arg\,min}
\newcommand{\blue}[1]{{\color{black}{#1}}}
\newcommand{\tP}[1]{{\tilde{P}}}
\DeclareMathOperator{\Tr}{Tr}
\newcommand{\ts}{\mathsf{T}}
\begin{document}
 
 \title[Tensor train based sampling algorithms]{Tensor train based sampling algorithms for approximating regularized Wasserstein proximal operators
}
\date{}
\author[Han]{Fuqun Han}\address{Department of Mathematics, University of California, Los Angeles}
\email{fqhan@math.ucla.edu}
\author[Osher]{Stanley Osher}
\address{Department of Mathematics, University of California, Los Angeles,
Los Angeles, CA, USA.}
\email{sjo@math.ucla.edu}
\thanks{F. Han and S. Osher are partially funded by AFOSR MURI FA9550-18-502 and ONR N00014-20-1-2787}
\author[Li]{Wuchen Li}
\address{Department of Mathematics, University of South Carolina, Columbia, SC, USA.}
\email{wuchen@mailbox.sc.edu}
\thanks{W. Li’s work is partially supported by AFOSR YIP award No.FA9550-23-1-0087, NSF DMS-2245097, and NSF RTG: 2038080.}

\begin{abstract}
We present a tensor train (TT) based algorithm designed for sampling from a target distribution and employ TT approximation to capture the high-dimensional probability density evolution of overdamped Langevin dynamics. This involves utilizing the regularized Wasserstein proximal operator, which exhibits a simple kernel integration formulation, i.e., the softmax formula of the traditional proximal operator. The integration, performed in $\mathbb{R}^d$, poses a challenge in practical scenarios, making the algorithm practically implementable only with the aid of TT approximation.
In the specific context of Gaussian distributions, we rigorously establish the unbiasedness and linear convergence of our sampling algorithm towards the target distribution. 
To assess the effectiveness of our proposed methods, we apply them to various scenarios, including Gaussian families, Gaussian mixtures, bimodal distributions, and Bayesian inverse problems in numerical examples. The sampling algorithm exhibits superior accuracy and faster convergence when compared to classical Langevin dynamics-type sampling algorithms.
\end{abstract}
\maketitle

{\bf Keywords:} Tensor train; Sampling; Wasserstein proximal; Bayesian inverse problem

\section{Introduction}
Over the past decade, obtaining samples from a known and potentially complicated distribution has become increasingly vital in the fields of data science, computational mathematics, and engineering. Sampling algorithms play a central role in many critical real-world applications, including finding global optimizers for a high-dimensional function  \cite{sampling_optimization_2019_MJ}, obtaining samples from the latent space in generative modeling \cite{song2019generative}, and solving Bayesian inverse problems to estimate the posterior distribution \cite{stuart2010inverse}. Efficient and reliable sampling algorithms are essential for the success of the aforementioned applications.


Given the significance of sampling from a known distribution, numerous intriguing algorithms have been proposed and analyzed. Among them, Markov Chain Monte Carlo (MCMC) type algorithms have been the most popular due to their simple formulation and intrinsic diffusion resulting from the adoption of Gaussian noise, which is desirable. Representative MCMC type algorithms include Metropolis random walk \cite{random_work_Metroplis}, hit and run \cite{hit_and_run} which are zero-order methods, and unadjusted Langevin \cite{ULA_first}, Hamiltonian Monte Carlo \cite{mackay2003book}, Metropolis-adjusted Langevin algorithms
 \cite{early_MALA}, which are first-order methods using Langevin diffusions. For a more in-depth review and details for Monte Carlo type algorithms, one may refer to \cite{brooks2011handbook} and references therein. Langevin-type MCMC methods usually involve evaluating the gradient of the potential function and adding a Gaussian noise to achieve diffusion. Several theoretical results showed that they can converge to the stationary distribution under proper assumptions \cite{ULA_2019_convergence,MALA_2019_converge}. \blue{
Moreover, the Langevin dynamics-based particle evolution has been successfully applied to Bayesian inference, which we will also explore in our numerical experiments, for instance \cite{FKP_Bayersian}.}
 
Classical Langevin dynamics-type sampling algorithms may exhibit slow convergence, particularly for complex and high-dimensional distributions. As an alternative approach, generating diffusion using the score function, defined as the logarithm of the density function, offers a promising direction for sampling algorithms. In \cite{song2019generative}, the author demonstrates that score-based diffusion minimizes the Kullback–Leibler divergence (KL divergence) between the target and generated distribution, a perspective that can also be interpreted as an entropy-regularized optimal transport problem \cite{kwon2022score}.
Score-based diffusion has already showcased impressive numerical performance in various domains, including image generation \cite{dhariwal2021diffusion}, medical inverse problems \cite{chung2022score}, and importance sampling \cite{doucet2022score}. Despite these successes, a notable challenge in many score function based algorithms lies in efficiently approximating the score function, stemming from the inherent complexity associated with solving the density function.


Recently, in \cite{BRWP_2023}, a new score function based sampling algorithm via the backward Wasserstein proximal operator (BRWP) is proposed. In that work, the authors approximate the score function by considering a regularized Wasserstein proximal, which can be shown to have a kernel formula through the Hopf-Cole transform. With the explicit form for the computation of the score function, the sampling algorithm becomes deterministic and relatively robust. Numerical experiments show that the evolution of samples exhibits a highly structured manner and a reliable convergence is guaranteed for several interesting scenarios. However, as the author pointed out in \cite{BRWP_2023}, the algorithm is biased due to the discretization of the particle evolution equation and is not easily scalable to high-dimensional spatial domains.


To address the curse of dimensionality inherent in high-dimensional sampling problems, many strategies have been employed. These approaches encompass the investigation of low-rank structures \cite{cui2016dimension, wang2022projected}, the utilization of accelerated gradient flow techniques \cite{Nesterov_MCMC, wang2022accelerated}, and the incorporation of deterministic sparse quadrature rules tailored for special cases \cite{schillings2013sparse, gantner2016computational}. In this work, we shall leverage efficient and delicate tensor algorithms to improve both theoretical results and the numerical performance of BRWP. Tensor train approximation, which can represent a broad class of functions with storage complexity $\mathcal{O}(d)$ and allows many algebraic computations of order  $\mathcal{O}(d)$, where $d$ is the dimension, has become popular in recent years. Tensor train approximations and algorithms, also called \blue{matrix product states} \cite{TT_Physics_origin}, are populated in \cite{TTSVD} and find applications in neural network representation \cite{TT_network}, achieving image super-resolution \cite{TT_image}, high-dimensional PDE simulation \cite{TT_PDE_simulation}, generative modeling \cite{TT_Gen_mod}, uncertainty quantification \cite{Eigel_2023} and so on. \blue{Many of the aforementioned applications rely on advancements such as the continuous tensor train approximation \cite{TT_cts}, the solution of linear systems in tensor train format \cite{AMEN_TT}, the development of crossing algorithms \cite{tt_cross}, and so on.} In this paper, we fully harness the power of tensor train approximation to derive proper tensor sampling algorithms that are computationally efficient and accurate for high-dimensional distributions. 

The proposed sampling algorithm presents several distinctive features. Firstly, from a computational complexity perspective, since the Gaussian kernel used in the score function approximation can be expressed as a tensor train of rank 1, our tensor train approach evaluates the kernel formula with a complexity of only \( \mathcal{O}(rNd) \), where \( r \) depends on the target distribution and \( N \) is the number of points per dimension. In contrast, direct quadrature requires \( \mathcal{O}(N^d) \) operations to achieve the same accuracy. Furthermore, the deterministic tensor-based approximation typically provides more accurate and stable results compared to Monte Carlo estimation.
Secondly, the paper introduces a delicate choice of covariance matrix for initial density estimation through the discretization of the density function on a high-dimensional mesh, leading to an unbiased sampling algorithm. Thirdly, diffusion, which is generated by a deterministic approximation procedure of the score function endows the proposed method with great robustness for highly ill-posed Bayesian inverse problems.

The paper is structured as follows. Section \ref{sec_Problem_Stat} provides an insightful review of the score function based sampling algorithm focusing on the BRWP proposed in \cite{BRWP_2023} and emphasizing the kernel representation of the solution for the regularized Wasserstein proximal operator. Section \ref{sec_TT_Basics} will introduce the fundamental concept of tensor train approximation and will present several important motivations for the adoption of tensor train approximation for our objective. Section \ref{sec_analysis} intricately details the proposed tensor train based sampling algorithm, offering comprehensive insights into its implementation and convergence analysis across critical scenarios. Finally, Section \ref{sec_NE} presents a series of numerical experiments to validate the proposed sampling algorithm's claimed features and provide a robust empirical foundation for its efficacy in real-world applications, such as Bayesian inverse problems. 

\section{Problem Statement and Score-Based Sampling Algorithm}
\label{sec_Problem_Stat}

In this section, our main goal is to present the definition of the score function and outline an approximation problem for a regularized Wasserstein proximal operator. The kernel formula for this operator is key to the derivation of our sampling algorithm. Following this, we provide a brief description of the Backward Regularized Wasserstein Proximal Scheme (BRWP) proposed in \cite{BRWP_2023}, which serves as motivation for the current work.
 
Our specific objective is to acquire a series of samples $\{x_{k,j}\}_{j=1}^N$ from a distribution $\Pi_k$, approximating the target distribution $\Pi^*$ with a density function
\begin{equation}
\rho^*(x) = \frac{1}{Z}\exp\left(-\beta V(x)\right),
\end{equation}
where $V(x)$ is a known continuously differentiable potential function, and \[Z=\int_{\mathbb{R}^d}\exp(-V(y))dy<+\infty\] is a normalization constant.

Score-based diffusion concerns the evolution of particles with density function $\rho$, following the Langevin stochastic differential equation
\begin{equation}
\label{SDE}
dX(t) = -\nabla V(X(t)) dt + \sqrt{2/\beta} dW(t), \quad X(0) = X_0,,
\end{equation}
where $\beta>0$ is a constant, $W(t)$ is the standard Wiener process in $\mathbb{R}^d$, and $X_0$ is the initial set of particles.

With the help of Louisville's equation \cite{FP_score}, we derive the scored-based particle evolution equation whose trajectories have the same marginal distribution as \eqref{SDE}
\begin{equation}
\label{particle_evolution}
\frac{dX}{dt} = -\nabla V(X) - \beta^{-1} \nabla \log\rho(t,X),
\end{equation}
where $\nabla \log\rho(t,X)$ is the score function associated with density $\rho$.

To compute the density function, it is known that $\rho$ will evolve following the Fokker-Planck equation
 \begin{equation}
 \label{FKP_eq}
    \frac{\partial \rho}{\partial t} = \nabla\cdot (\nabla V(x)\rho)+\beta^{-1}\Delta\rho\,,\quad \rho(0,x) = \rho_0(x)\,, 
\end{equation}
for the initial density $\rho_0$. 

Approximating the terminal density $\rho_T:= \rho(T,\cdot)$ with terminal time $T$ is challenging due to non-linearity and high dimensions. To address this, we consider a kernel formula to approximate a regularized Wasserstein operator. Specifically, we first recall the Wasserstein proximal with linear energy
\begin{equation}
\label{Wass_linear_energy}
   \rho_T=\arg\min_{q} \bigg[\frac{1}{2T}W(\rho_0, q)^2+ \int_{\mathbb{R}^d}V(x)q(x) dx\bigg]\,, 
\end{equation}
where $W(\rho_0, q)$ is the Wasserstein-2 distance between $\rho_0$ and $q$. Additionally, by the Benamou-Brenier formula, the Wasserstein-2 distance can be expressed as an optimal transport problem, leading to
\[
\frac{W(\rho_0, q)^2}{2T} = \inf_{\substack{\rho, v, \rho_T}} \int_{0}^{T} \int_{\mathbb{R}^d} \frac{1}{2} \|v(t, x)\|_2^2 \rho(t, x) \,dx\,dt,
\]
where the minimizer is taken over all vector fields $v : [0, T] \times \mathbb{R}^d \to \mathbb{R}^d$, density functions $\rho : [0, T) \times \mathbb{R}^d \to \mathbb{R}$, such that
 \begin{align*}
&\frac{\partial \rho}{\partial t} + \nabla \cdot (\rho v) = 0\,,\quad \rho(0, x) = \rho_0(x)\,,\quad \rho(T, x) = q(x)\,.
\end{align*}

 Solving \eqref{Wass_linear_energy} is usually a challenging optimization problem. Motivated by Schr\"{o}dinger bridge systems, in \cite{kernel_proximal}, the authors introduce a regularized Wasserstein proximal operator by adding a Laplacian regularization term, leading to
\blue{ \begin{equation}
\label{Reg_Was}
   \rho_{T} = \argmin_{q}\inf_{v,\rho}  \int_{0}^{T} \int_{\mathbb{R}^d} \frac{1}{2} |\!|v(t, x)|\!|^2 \rho(t, x) \,dx\,dt + \int_{\mathbb{R}^d} V(x) q(x) \,dx\,,
\end{equation}}
with
\begin{equation}
   \frac{\partial \rho}{\partial t}  + \nabla \cdot (\rho v) = \beta^{-1} \Delta \rho,, \quad
    \rho(0, x) = \rho_0(x)\,, \quad \rho(T, x) = q(x)\,,
\end{equation}
where $\rho(t,x)$ as the solution to the above regularized Wasserstein proximal operator approximates one step of the Fokker-Planck equation when $T$ is small which can be seen in the next equation. 

Introducing a Lagrange multiplier function $\Phi$, we find that solving $\rho_T$ is equivalent to computing the solution of the coupled PDEs
\begin{align}
\label{regu_PDE}
\begin{cases}
    &\partial_t \rho + \nabla_x\cdot (\rho \nabla_x\Phi) = \beta^{-1} \Delta_x\rho\,, \\
    &\partial_t \Phi + \frac{1}{2}||\nabla_x\Phi||^2 = -\beta^{-1} \Delta_x\Phi\,, \\
    &\rho(0,x) = \rho_0(x)\,, \quad \Phi(T,x) = -V(x)\,.
\end{cases}
\end{align}
\blue{
Comparing the first equation in \eqref{regu_PDE} with the Fokker-Planck equation defined in \eqref{FKP_eq}, we observe that the solution to the regularized Wasserstein proximal operator \eqref{Reg_Was} approximates the terminal density \(\rho\) when \(T\) is small. The motivation for considering the regularized Wasserstein proximal operator as an approximate solution lies in the fact that the coupled PDEs can be solved using a Hopf-Cole type transformation.} Then $\rho(t,x)$ can be computed by solving a system of backward-forward heat equations 
\begin{equation}
\label{Heat_rho}
    \rho(t,x) = \eta(t,x)\hat{\eta}(t,x)\,,
\end{equation}
where $\hat{\eta}$ and $\eta$ satisfy
\begin{align}
&\partial_t\hat{\eta}(t,x) = \beta^{-1} \Delta_x\hat{\eta}(t,x)\,, \label{backward_heat}\\
&\partial_t\eta(t,x) = -\beta^{-1} \Delta_x\eta(t,x)\,, \label{forward_heat}\\
&\eta(0,x)\hat{\eta}(0,x) = \rho_0(x), \quad \eta(T,x) = \exp\left(-\beta V(x)\right)\,.
\end{align}

\blue{
In summary, equation \eqref{Heat_rho} provides a closed-form update to compute the density function by solving a system of backward-forward heat equations, which can be explicitly evaluated using heat kernels. Once \(\rho_T\) is determined, the evolution of particles in the next iteration can be computed using the following discretization for equation \eqref{particle_evolution}:
\begin{equation}
\label{FP_Euler}
    X_{k+1} = X_k - h\big(\nabla V(X_k) + \beta^{-1}\nabla\log(\rho_T(X_k))\big),
\end{equation}
where \(h\) is the step size and \(\rho_T\) is the terminal density generated from initial density $\rho_k$.
We remark that our particle evolution equation can be considered as a 'semi-backward' Euler discretization of the overdamped Langevin dynamics, as the score function is evaluated at time \(t_k + T\) at the \(k\)-th step.}

For a comprehensive understanding of our upcoming discussion, we recall the key steps in the BRWP algorithm developed in \cite{BRWP_2023}. This algorithm estimates the initial density $\rho_0$ by empirical distribution, computes \eqref{Heat_rho} by convolution with a heat kernel as
\begin{equation}
\label{rho_T_BRWP}
 \rho_T(x) =  \int_{\mathbb{R}^d}\frac{\exp\big[-\frac{\beta}{2}\big(V(x)+\frac{||x-y||_2^2}{2T}\big)\big]}{\int_{\mathbb{R}^d}\exp\big[-\frac{\beta}{2}\big(V(z)+\frac{||z-y||_2^2}{2T}\big)\big]dz}\rho_0(y)dy
\end{equation}
and generates new samples by \eqref{FP_Euler}. In particular, the integral in $\mathbb{R}^d$ that appeared in \eqref{rho_T_BRWP} is estimated using Monte-Carlo integration.


\section{Review on Tensor Train Approximation and Algorithms}
\label{sec_TT_Basics}

In this section, we will provide a concise overview of the definition, algorithms, and convergence properties of tensor train (TT) approximation applied to several crucial classes of multivariate functions of interest. The general objective of TT approximation is to reduce the computational complexity and improve the robustness of the sampling algorithms that we will introduce in the subsequent sections.

The TT decomposition $f_{TT}$ of a $d$-dimensional tensor $f: \mathbb{R}^d\rightarrow \mathbb{R}$ is defined as follows:
\begin{equation}
\label{def_TT}
  f(x_1, \ldots, x_d)  \approx f_{TT}(x_1, \ldots, x_d) = \sum_{\alpha_1=1}^{r_1} \cdots \sum_{\alpha_{d-1}=1}^{r_{d-1}} g_1(x_1, \alpha_1)g_2(\alpha_1, x_2, \alpha_2)  \cdots g_d(\alpha_{d-1}, x_d)\,,
\end{equation}
where $r = \max\{r_j\}_{j=1}^d$ is the rank of the decomposition, and the functions $g_i$ are cores of the TT decomposition. We remark \eqref{def_TT} can be generalized to functional approximation in $\mathbb{R}^d$.

The TT representation of a high-dimensional tensor enables efficient computation of various algorithms, including addition, Hadamard product, and matrix-vector products, especially when the rank $r$ is relatively small. Assuming the number of nodal points in each dimension is $n$, then the complexity of several frequently used algorithms for tensors in TT format is summarized in the following table \cite{TTSVD}:
\begin{center}
\begin{tabular}{|c|c|c|c|c|}
 \hline
 \text{Operation} & Rounding & Addition & Hadamard Product & Matrix-Vector Product \\
 \hline
 Complexity & $\mathcal{O}(dnr^3)$ & $\mathcal{O}(dnr^3)$ & $\mathcal{O}(dnr^3)$ & $\mathcal{O}(dn^2r^4)$ \\
 \hline
\end{tabular}
\end{center}

To use TT algorithms effectively, it is crucial to determine the class of functions for which the TT representation \eqref{def_TT} exists with a relatively small rank $r$. Since the TT decomposition can be viewed as a recursive singular value decomposition, we recall the following theorem from \cite{TT_accuracy} based on SVD if $f$ is a $d$-dimensional tensor.

\begin{theorem}
\label{Thm_TT_accuracy} \cite{TT_accuracy}
\blue{Let $f \in H^{k+1}(\mathbb{R}^d)$  for some fixed $k > 0$ and $0 < \epsilon < 1$. Then, the overall truncation error of the TT decomposition for $f$ with ranks $r \leq \epsilon^{-d/k}$ is given by}
\[
\| f - f_{TT}\|_{L^2(\mathbb{R}^d)} \leq \sqrt{d}\epsilon,
\]
and the storage cost for TT representation will be $\epsilon^{-d/k}$.
\end{theorem}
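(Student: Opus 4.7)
The plan is to invoke the TT-SVD construction of \cite{TTSVD}, which builds $f_{TT}$ by sequentially applying truncated singular value decompositions to the unfoldings of $f$. For each $j \in \{1,\dots,d-1\}$, let $F_j(\cdot\,;\cdot)$ denote the $j$-th unfolding of $f$, i.e.\ the bivariate function obtained by grouping the variables $(x_1,\dots,x_j)$ against $(x_{j+1},\dots,x_d)$, and let $\sigma_i^{(j)}(f)$ be the singular values of the associated integral operator. The Pythagorean-type error identity for TT-SVD gives
\begin{equation*}
\|f - f_{TT}\|_{L^2(\mathbb{R}^d)}^{2} \;\leq\; \sum_{j=1}^{d-1} \sum_{i > r_j} \bigl(\sigma_i^{(j)}(f)\bigr)^{2},
\end{equation*}
so the problem reduces to controlling the singular value tails of each unfolding uniformly in $j$ using the Sobolev smoothness of $f$.

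Next I would convert $H^{k}$ regularity into a polynomial decay estimate for the singular values of $F_j$. Expanding $f$ in a tensorized orthonormal basis adapted to $H^{k}(\mathbb{R}^d)$ (for instance a weighted Hermite or Fourier basis, so that the $H^{k}$ norm is equivalent to a polynomial weighting of coefficients), the unfolding $F_j$ becomes a Hilbert--Schmidt operator whose singular values inherit the decay of the weights. Classical Kolmogorov $n$-width results for Sobolev embeddings then yield a bound of the form $\sigma_i^{(j)}(f) \leq C\,\|f\|_{H^{k}(\mathbb{R}^d)}\, i^{-k}$, uniformly in $j$. Summing the tail from $i=r+1$ gives $\sum_{i>r}\bigl(\sigma_i^{(j)}\bigr)^{2} \leq C'\,\|f\|_{H^{k}}^{2}\,r^{-(2k-1)}$, and choosing $r \leq \epsilon^{-1/k}$ forces this tail to be $\leq \epsilon^{2}$ after absorbing constants into the normalization.

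Plugging into the Pythagorean bound yields $\|f - f_{TT}\|_{L^2(\mathbb{R}^d)}^{2} \leq (d-1)\epsilon^{2} \leq d\epsilon^{2}$, i.e.\ the advertised $\sqrt{d}\,\epsilon$ error. For the storage claim, each TT core is stored as an $r \times n \times r$ array, so with $r = \epsilon^{-1/k}$ and a number of nodal points per axis chosen commensurate with $r$, the total count over $d$ cores is of order $\epsilon^{-d/k}$ in the worst case (the TT format compressing the genuinely full-tensor bound). The main obstacle is the singular-value decay step: on the unbounded domain $\mathbb{R}^d$ the cleanest Kolmogorov-width estimates do not apply directly, so either a weighted Hermite framework has to be set up carefully or a localization/truncation argument has to be introduced, and one must verify that the implicit constant in $\sigma_i^{(j)} \leq C i^{-k}$ is \emph{uniform} in the cut index $j$ so that no extra factor of $d$ appears beyond the $\sqrt{d}$ in the final estimate.
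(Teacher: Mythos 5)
The paper does not prove this theorem at all --- it is stated as a direct citation of \cite{TT_accuracy}, so there is no in-paper argument to compare your reconstruction against; I can only assess it on its own merits. Your high-level template (TT-SVD, the quasi-orthogonality/Pythagorean error identity summed over the $d-1$ unfoldings, and singular-value decay inferred from smoothness) is indeed the standard route for TT approximation estimates of this flavor, so the skeleton is right. But the exponent arithmetic in the key step does not close. If $\sigma_i^{(j)} \lesssim i^{-k}$, then $\sum_{i>r} \bigl(\sigma_i^{(j)}\bigr)^2 \lesssim r^{-(2k-1)}$, and taking $r = \epsilon^{-1/k}$ gives $\epsilon^{(2k-1)/k} = \epsilon^{2-1/k}$, which for $\epsilon \in (0,1)$ is \emph{strictly larger} than $\epsilon^2$ and cannot be absorbed into a multiplicative constant --- it is a power-of-$\epsilon$ deficit, not a constant. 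To reach the advertised $\sqrt{d}\,\epsilon$ you need singular-value decay of order $i^{-(k+1/2)}$ (which is in fact the rate one typically establishes for $H^k$ bivariate kernels by integration-by-parts/Jackson-type arguments, not by a bare Kolmogorov-width count), or equivalently a different rank-to-$\epsilon$ calibration.

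Two further issues. The non-compactness obstacle you flag at the end is not a side technicality you can defer: $H^k(\mathbb{R}^d)\hookrightarrow L^2(\mathbb{R}^d)$ is \emph{not} compact (translations of a bump are bounded in $H^k$ with no $L^2$-convergent subsequence), so for a generic $f\in H^k(\mathbb{R}^d)$ the unfoldings have singular values that need not decay to zero at all, and some additional decay/weight hypothesis on $f$ is indispensable for the statement to even be meaningful. Your proposal defers exactly the step that resolves this. Finally, on the storage claim: $\epsilon^{-d/k}$ is the size of the full discretized tensor with $n\sim\epsilon^{-1/k}$ nodes per axis, not the TT storage; your own core-by-core count gives $O(d\,\epsilon^{-3/k})$, and the leap to $\epsilon^{-d/k}$ ``in the worst case'' is not supported by the reasoning you give (your count is actually the better one --- the theorem's storage figure appears to refer to the uncompressed tensor).
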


The above theorem is applicable since, as we will see later, for sampling problems, functions we approximate in TT format are the exponential of potential functions or simply the heat kernel, which is typically very smooth. In such cases, $k$ can be relatively large, resulting in small ranks for TT approximation and low storage complexity for a fixed $d$.

Moreover, it is noteworthy that the estimate in Theorem \ref{Thm_TT_accuracy} remains slightly unsatisfactory, considering that the storage cost and approximation error still depend on $\sqrt{d}$ or $\epsilon^{-d/k}$, which might be expensive for high-dimensional scenarios. For an alternative approximation result, specifically when $f$ represents the density function of a Gaussian distribution, we refer to \cite{rohrbach2020rank} which presents a convergence theorem that is independent of dimension and relies on the off-diagonal ranks of the variance matrix.

The next issue we need to address is the algorithm to obtain a proper TT decomposition efficiently for a given tensor. Based on our numerical experience, we found that the TT crossing algorithm in \cite{tt_cross} is the most efficient and stable one for our purpose. For a discretized tensor defined on $(\mathbb{R}^n)^d$, where $n$ is the number of nodal points in each dimension, the general idea of TT crossing-type algorithms is to unfold the tensor into a matrix $A_1$ of the form $\mathbb{R}^n\times \mathbb{R}^{n(d-1)}$, called the unfolding matrix. The standard crossing approximation is then applied on $A_1$ to have
\[
A_1 \approx C\hat{A}^{-1}R\,,
\]
with
\[
C = A_1(:,I_1)\,, \quad R = A_1(J_1,:)\,,
\]
where $I_1$ and $J_1$ are small index sets chosen by certain energy minimization algorithms. This process is repeated for each dimension. Notably, crossing algorithms only require access to a small number of entries of the original tensor, and the full tensor is never formed in the algorithm.

For the accuracy of the TT crossing algorithm, the following error bound of TT approximation in the Frobenius norm for $f$ discretized on $n$ points in each dimension is established in \cite{qin2023error}.

\begin{theorem} \cite{qin2023error}
\label{Thm_TT_cross}
    Suppose a tensor $f$ can be approximated by a tensor train with maximum rank $r$ and error $\epsilon$. Using the crossing algorithm with tolerance $\epsilon$, we can find $f_{TT}$ with rank at most $r$ such that
    \[
    ||f-f_{TT}||_F \leq \frac{(3\kappa)^{\log_2 d} -1}{3\kappa - 1}(r+1)\epsilon\,,
    \]
where $\kappa$ is the condition number of the unfolding matrix of $f$ \blue{whose precise definition can be found in \cite{qin2023error}.}
\end{theorem}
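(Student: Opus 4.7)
The plan is to exploit the recursive structure of the TT-cross algorithm: the $d$ modes are split into two halves, each half is recursively decomposed, and at every internal node of the resulting binary tree a matrix cross (CUR) approximation of the associated unfolding is formed. For a balanced split the recursion has depth $\log_2 d$, and the quoted bound will follow by proving (i) a single-level error amplification of the form $(r+1)\kappa$ for the matrix cross step and (ii) a multiplicative factor of $3\kappa$ per level of the recursion, then summing the resulting geometric series.

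First I would prove a one-step matrix lemma. Suppose $A$ admits a rank-$r$ approximation $\tilde A$ with $\|A - \tilde A\|_F \leq \epsilon$, and let $\hat A = A(I,J)$ be the pivot submatrix chosen (for instance by the maximum-volume criterion) so that $C = A(:,J)$, $R = A(I,:)$, and the cross approximation is $C\hat A^{-1} R$. Decompose
\[
A - C\hat A^{-1} R = (A - \tilde A) + \bigl(\tilde A - \tilde C \widehat{\tilde A}^{-1}\tilde R\bigr) + \bigl(\tilde C \widehat{\tilde A}^{-1}\tilde R - C\hat A^{-1} R\bigr).
\]
The three terms are bounded by $\epsilon$, by $0$ (the exact skeleton recovers a rank-$r$ matrix from any non-degenerate $r\times r$ minor), and by a perturbation of a rank-$r$ expression controlled by $\|\hat A^{-1}\|$ acting on rank-$r$ factors — which yields the $(r+1)\kappa\,\epsilon$ prefactor. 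This is the classical CUR perturbation estimate and produces the $(r+1)$ in the final bound.

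Next I would bootstrap this estimate along the recursion tree. At a node of depth $k$ the one-step lemma is applied to the current unfolding, but the ``target'' at this level is itself an approximate object produced from the children, so its distance from the true unfolding equals the child-error. Splitting $A - C\hat A^{-1}R$ as the child-error plus its image under the parent's cross step plus the intrinsic cross residual, and using the uniform bound $\|\hat A^{-1}\|\leq \kappa$ on the submatrix inverse, gives an amplification of exactly $3\kappa$ per level. Summing $(r+1)\epsilon\sum_{k=0}^{\log_2 d - 1}(3\kappa)^k$ yields the closed-form $\frac{(3\kappa)^{\log_2 d}-1}{3\kappa - 1}(r+1)\epsilon$.

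The main obstacle is twofold. Keeping the per-level amplification at the tight constant $3\kappa$, rather than a looser bound obtained by naive submultiplicativity of operator norms, requires careful bookkeeping of how the pivot submatrices at a parent inherit from those of its children and how their inverses interact with upstream errors. Even more delicate is ensuring that a single $\kappa$ dominates every unfolding encountered across the recursion, which forces one to invoke the quasi-optimality properties of maximum-volume (or DEIM-type) pivot selection so that $\|\hat A^{-1}\|$ is controlled uniformly in $d$ — this is the technical heart of the argument and the place where a naïve cross scheme would fail.
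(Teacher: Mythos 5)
The paper does not prove this theorem: it is quoted verbatim from \cite{qin2023error} with a citation and no argument of its own, so there is no in-paper proof to compare your attempt against. Your sketch is a plausible reverse-engineering of the stated bound — the $\log_2 d$ exponent indeed points to a balanced binary dimension-splitting recursion, the $(r+1)$ factor is the familiar prefactor in skeleton/CUR perturbation estimates, and the geometric-series closed form $\frac{(3\kappa)^{\log_2 d}-1}{3\kappa-1}$ is consistent with an error that is injected once per level and amplified by a constant factor $3\kappa$ when propagated to the parent.

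That said, your proposal is an outline rather than a proof, and the two places you yourself flag as delicate are precisely the places where the argument is currently asserted rather than shown. First, the ``exactly $3\kappa$ per level'' claim: your decomposition into child-error, image of child-error under the parent cross projector, and intrinsic parent residual would naively give a bound of the shape $(1+c_1\kappa)\,e_{\text{child}} + c_2(r+1)\kappa\,\epsilon$ for unspecified constants $c_1,c_2$; turning this into the clean $3\kappa\,e_{\text{child}} + (r+1)\epsilon$ recursion that solves to the stated geometric sum requires showing how the pivot sets and their inverses at a parent are controlled by those of the children, and you do not carry that out. Second, the uniform use of a single condition number $\kappa$ across all unfoldings of all subtensors encountered in the recursion is a nontrivial assumption; the theorem statement defines $\kappa$ as ``the condition number of the unfolding matrix of $f$'' (singular), so either the reference proves that the relevant submatrix inverses are all dominated by this one $\kappa$ via a maxvol/quasi-optimality argument as you suggest, or the statement quietly takes $\kappa$ to be a worst case over all unfoldings — your sketch should commit to one reading and justify it. Without those two pieces, the proposal correctly identifies the architecture a proof must have but does not establish the bound.
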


\subsection{A Straightforward Tensor Train Algorithm to Improve BRWP}
In this subsection section, we initially propose a straightforward sampling method that leverages TT algorithms discussed in the preceding subsection. This method is designed to compute high-dimensional integrals appearing in \eqref{rho_T_BRWP}. The error analysis of this algorithm offers valuable insights into the power of TT approximations in approximating the kernel formula, paving the way for the development of the new algorithm to be presented in the next section.

In the algorithm described in \cite{BRWP_2023}, for each iteration, it requires the computation of the density $\rho_T$ using the following formula
\begin{equation}
\label{empirical_rhoT}
    \rho_T(x_j) = \frac{1}{N}\sum_{i=1}^N \frac{\exp\left[-\beta/2 \left(\blue{V(x_j)} + ||x_i - x_j||_2^2/(2T)\right)\right]}{\int_{\mathbb{R}^d} \exp\left[-\beta/2 \left(V(z) + ||x_i - z||_2^2/2T\right)\right]dz}\,,
\end{equation}
for each particle located at $x_j$.

We observe that the bottleneck of computation in \eqref{empirical_rhoT} at each iteration lies in the normalization term
    \begin{equation}
\label{def_normal}
    N(x) := \int_{\mathbb{R}^d} \exp\left[-\frac{\beta}{2}\left(V(z) + \frac{||x - z||_2^2}{2T}\right)\right]dz\,,
\end{equation}
which involves integration in $\mathbb{R}^d$. Monte Carlo integration through random sampling can only provide a solution that converges at a rate of $N^{-1/2}$, where $N$ is the number of random points used in the integration. It is important to note that the $N^{-1/2}$ error bound does not imply that the computational complexity to achieve a fixed degree of accuracy is the same for any dimension, as the complexity of evaluating a $d$-dimensional function $f$ will also depend on $d$. For certain scenarios, as we will present in the following table \ref{table_TT_time}, the convergence could be extremely slow for some interesting high-dimensional integrals that arise in sampling problems.

In this case, we propose to apply TT approximation to improve both the accuracy and efficiency of the computation of the normalization term in \eqref{def_normal}. We first consider $n$ quadrature points $\{z_j\}_{j=1}^n \in [-L, L]$ in each dimension. The mesh formulated by $\bigotimes\{z_j\}$ in $[-L, L]^d$ is denoted as $\mathcal{Z}_{d,n}$. The integration then becomes
\begin{equation}
\label{exp_int}
    N(x) \approx K_d\mathcal{T}(\exp(-\beta V/2))(x),
\end{equation}
where $\mathcal{T}(f)$ is the tensor train approximation using the crossing algorithm for $f$ on the mesh $\mathcal{Z}_{d,n}$, and $K_{d}$ is defined as
\begin{equation}
\label{def_Kd}
    K_d(x, y) = K_1(x, y) \otimes \cdots \otimes K_1(x, y),
\end{equation}
with
\blue{\[
(K_1(x, y))_{j,k} = w_j \exp\left(-\beta\frac{||x_j -y_k||_2^2}{4T}\right),
\]
and $w_j$ being quadrature weights for $x_j$.
The error term \(\epsilon\) in the approximation \eqref{exp_int} consists of two components: the truncation error from limiting the integration to \([-L, L]^d\), which is small due to the exponential decay of \(\exp(-\beta||x - y||_2^2/4T)\) for large \(y\), and the quadrature error. The quadrature error will be of order \(n^{-n}\) if \(\{x_j\}\) and \(\{y_k\}\) are Legendre quadrature points for smooth functions.  
}

\blue{Then, the computation of the density function $\rho_T$ becomes
\begin{equation}
    \rho_T(x_j) = \int_{\mathbb{R}^d} G(x_j, y) \rho_0(y) dy\,, \quad G(x, y) = \mathcal{T}(\exp(-\beta V/2))(x)\frac{K_d(x, y)}{N(y)}\,.
\end{equation}
 
We remark that \(\mathcal{T}(\exp(-\beta V/2))(x)\) is the tensor train approximation of \(\exp(-\beta V/2)\), which is a \(d\)-dimensional tensor. The term \(K_d\) is defined as the Kronecker product of \(d\) matrices, which can be interpreted as a tensor train approximation to the heat kernel with rank 1.}

Combining Theorems \ref{Thm_TT_accuracy} and \ref{Thm_TT_cross}, we observe that the score function 
\[
\nabla\log(\rho_T)(x_j) = \frac{\nabla \rho_T(x_j)}{\rho_T(x_j)}
\]
can be computed efficiently and accurately when $\exp(-\beta V(x)/2)$ is sufficiently smooth and has relatively small TT ranks. Before we proceed, we would like to provide an example to demonstrate the improvement of accuracy and efficiency of approximating the integral \eqref{def_normal} for a special case where $\rho_0 = 1$ and $N(y) = 1$. In this case, the function $\nabla\log(\rho_T)$ can be explicitly written out for some cases by computing the proximal of $V(x)$ directly. We pick a challenging non-convex potential function $V(x) = |x|_{1/2}$, the computational time and accuracy using TT integration and MC integration are summarized in the following tables.
\begin{table}[H]
\parbox{.56\linewidth}{
\centering
 \begin{tabular}{l|l|lll}
& \makecell{TT\\ Integration} & \multicolumn{2}{c@{}}{\text{MC Integration with}}   \\  \cline{3-4}
&  & $10^3$ Samples & $10^6$ Samples   \\\hline
Error & 0.099     & 0.224   & 0.190 \\
Time  &$5$ & $<1$ & $85$
\end{tabular}
\caption{Relative error and time for different numbers of samples in seconds. The number of grid points in each dimension is $24\times L$ distributed in $[-L,L]$ with $L = 6$. The numerical rank of the tensor train is 3.}}
\hfill
\parbox{.4\linewidth}{
\centering
\begin{tabular}{l|ll}
$d$    &\makecell{TT\\ Integration} &\makecell{MC\\ Integration}  \\\hline
 10  & 0.27        &  31.48     \\
100 &6.29     & 277.81   \\
200& 16.53& 2186.5
\end{tabular} 
\label{table_TT_time}
\caption{Computational time in seconds among different dimensions $d$ \blue{with the same level of relative error about $0.1$.}}
.}
\end{table}

\blue{
}

 
 In summary, TT integration demonstrates substantial advantages in terms of accuracy and efficiency when compared to MC integration, especially for some high-dimensional integrations.
 
Given the improved accuracy and efficiency of using the tensor train to compute integral as in \eqref{exp_int}, we present the following theorem that rigorously demonstrates the improvement in the accuracy of the sampling algorithm. This is done for a representative scenario where both $\Pi_k$ and $\Pi^*$ are Gaussian distributions. For the notional sake, we will write $f(n,d)\lesssim g(n,d)$ when $f(n,d)\leq Cg(n,d)$ for a positive constant $C$ independent of $n$ and $d$.

\blue{\begin{theorem}
\label{Thm_compare_accu}
    For BRWP in \cite{BRWP_2023}, let $V(x) = x^{\ts}x/(2\sigma^2)$, and denote $X_k$ and $\tilde{X}_k$ be samples obtained at the $k$-th iteration with TT integration and MC iteration with means $\mu_k$ and $\tilde{\mu}_k$ respectively. Assuming all steps in the BRWP Algorithm are exact except the step of approximating $N(x)$ in \eqref{def_normal}. If the complexities of the two numerical integration methods are of the same order, and let $\mu_{\infty} = \lim_{k\rightarrow \infty}\mu_k$, $\tilde{\mu}_{\infty} = \lim_{k\rightarrow \infty}\tilde{\mu}_k$, we have
    \[
        |\mu_{\infty} - \mu^*| \lesssim \frac{d}{2^{n}}\,, \quad |\tilde{\mu}_{\infty} - \mu^*| \lesssim \frac{1}{d^{1/2} n }\,,
    \]
    where $n$ represents the number of discretization points in each dimension for TT integration. 
\end{theorem}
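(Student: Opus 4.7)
The plan is to exploit the closed form available for quadratic $V$. With $V(x)=x^{\ts}x/(2\sigma^{2})$, the integrand in \eqref{def_normal} factorizes completely across coordinates, so $N(x)$ is itself a Gaussian in $x$ and $\nabla\log N(x)$ is affine. Consequently the exact BRWP update, when restricted to Gaussian iterates, reduces to an affine recursion on the mean of the form $\mu_{k+1}=\gamma\mu_{k}$ with a contraction factor $\gamma\in(0,1)$ whose explicit dependence on $\sigma,T,h,\beta$ is worked out in \cite{BRWP_2023}. If one replaces $N(x)$ by an approximation $\hat N(x)$ with bias $e(x):=\hat N(x)-N(x)$, expanding $\nabla\log\hat N=\nabla\log N+\nabla e/N+O(e^{2})$ introduces a constant drift term $b$ into the recursion, so that $\mu_{k+1}=\gamma\mu_{k}+b$ and hence $|\mu_{\infty}-\mu^{*}|=|b|/(1-\gamma)$. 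Thus each of the two claimed bounds reduces to estimating $|b|$ for its respective integration method, divided by the $d$-independent factor $1-\gamma$.

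For the TT estimate, the rank-one separability of the Gaussian integrand means that the TT-cross output in \eqref{exp_int} collapses to a tensor product of one-dimensional Gauss-Legendre quadratures on $[-L,L]^{d}$. Each one-dimensional integrand is entire with Gaussian tails, so the 1D Legendre error is bounded by $C\,2^{-n}$ on the truncated interval (the truncation tail from $|z|>L$ is super-exponentially smaller and is absorbed). Combining across coordinates via the product identity $\prod_{i=1}^{d}(1+\varepsilon_{i})-1\le d\varepsilon+O(d^{2}\varepsilon^{2})$, the relative error satisfies $|e(x)|/N(x)\lesssim d/2^{n}$, which after passing through the score and the fixed-point formula produces $|\mu_{\infty}-\mu^{*}|\lesssim d/2^{n}$.

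For the MC estimate, the CLT gives $|b_{\mathrm{MC}}|\lesssim\sigma_{\mathrm{MC}}/\sqrt{N_{\mathrm{MC}}}$, where $\sigma_{\mathrm{MC}}^{2}$ is the variance of the importance-sampled integrand. A direct calculation using Gaussian moments, and the concentration of $\|z\|^{2}$ around its mean $d\sigma^{2}$, yields $\sigma_{\mathrm{MC}}^{2}\lesssim 1/d$ after normalization. Matching complexities with the TT scheme, whose per-evaluation cost is $O(dn^{2})$ from the matrix-vector entry of the table in Section~\ref{sec_TT_Basics}, gives $N_{\mathrm{MC}}\asymp n^{2}$, and substituting yields $|b_{\mathrm{MC}}|\lesssim 1/(d^{1/2}n)$, hence $|\tilde\mu_{\infty}-\mu^{*}|\lesssim 1/(d^{1/2}n)$.

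The main obstacle is the sharp variance calculation on the MC side: one has to pick the right importance density, show that its normalization constant does not spoil the $d^{-1/2}$ concentration factor, and reconcile the count of evaluations with the TT complexity table so that the translation $N_{\mathrm{MC}}\asymp n^{2}$ is justified. A secondary check is that the contraction rate $\gamma$ of the unperturbed Gaussian iteration remains bounded away from $1$ uniformly in $d$, so that the division by $1-\gamma$ in the fixed-point formula does not reintroduce hidden dimensional factors; this is immediate from the spectral form of the exact update for quadratic $V$.
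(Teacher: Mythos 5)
Your proposal shares the overall skeleton with the paper's proof: reduce the asymptotic bias in $\mu_\infty$ to the numerical error in approximating $N(y)$, bound that error separately for the two quadrature schemes, and equalize computational budgets to make the comparison fair. Your TT bound is essentially the paper's: per-coordinate Gauss--Legendre error of order $2^{-n}$, truncation tails absorbed, and a first-order union bound over the $d$ coordinates to get $d/2^n$ (the paper reaches the same via Stirling and Hermite-polynomial bounds). Your explicit affine-recursion argument for why a bias $b$ in $N$ gives $|\mu_\infty-\mu^*|=|b|/(1-\gamma)$ is also a welcome sharpening of the paper's one-line remark that the error propagates through the iterations.

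The MC side, however, has a genuine gap that does not appear in the paper's argument. You attribute the $d^{-1/2}$ factor to a variance reduction of the MC estimator, claiming $\sigma_{\mathrm{MC}}^2 \lesssim 1/d$ from ``concentration of $\|z\|^2$.'' This is not the right accounting and is most likely false: the quantity being estimated is a $d$-dimensional Gaussian integral, and the \emph{relative} variance of any importance-weighted MC estimator of such an integral does not shrink with $d$ --- indeed for an imperfect importance density it typically grows (the usual curse of dimensionality), and even in the ideal $O(1)$-coefficient-of-variation regime assumed by the CLT it stays order one, not $1/d$. The paper obtains the $d^{-1/2}$ purely from the budget-matching step: it sets the MC flop count equal to the TT flop count $n^2 d$, giving $N_{\mathrm{MC}}=n^2 d$, and then applies the standard $N_{\mathrm{MC}}^{-1/2}$ rate, producing $1/(n\sqrt{d})$ with no variance-shrinkage claim at all. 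Your budget-matching instead sets $N_{\mathrm{MC}}\asymp n^2$ (charging $O(d)$ per MC evaluation), which is a defensible refinement of the paper's cruder accounting, but it leaves you needing exactly the $\sigma_{\mathrm{MC}}^2\lesssim 1/d$ claim that you cannot support; under the standard $O(1)$-variance assumption your route would only yield $1/n$, missing the dimensional factor. To repair the argument you should either adopt the paper's flop convention (treat the number of MC samples, not MC flops, as the budget to be matched against $n^2 d$) or be explicit that you are assuming a different per-sample cost model and then justify the MC error constant directly rather than invoking concentration of $\|z\|^2$.
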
}

\begin{proof}
    From \cite{BRWP_2023} and our analysis in the next section, under exact arithmetic, $\mu_{\infty}$ and $\tilde{\mu}_{\infty}$ all converge to $\mu^*$. Hence, we shall examine the numerical errors generated by the two different integration methods when approximating $N(y)$ defined in \eqref{def_normal}. 

    Let $N_1(y)$ and $N_2(y)$ be the approximations to $N(y)$ by TT integration and MC integration, respectively. We have
    \[
        |N_2(y) - N(y)| \lesssim n_{mc}^{-d/2}\,,
    \]
    where  $n_{mc}^d$ is the number of samples used in MC integration, involving $\mathcal{O}(n_{mc}^d)$ flops.

    For $N_1$, considering $n$ quadrature points in each dimension and truncating the integration to $[-L, L]^d$, then the error between $N_1(y)$ and $N(y)$ consists of two parts: one from truncation of integration to a bounded domain denoted as $E_1$, and the other from the quadrature rule denoted as $E_2$.
    
    For $E_1$, using the upper bound for the error function of the Gaussian distribution, we have
    \[
        E_1 \lesssim d\frac{\exp(-\beta L^2/(4\sigma^2))}{L}\,.
    \]
    For $E_2$, recalling the classical error bound of the Gauss-Legendre quadrature, we derive
    \[
        E_2 \leq d\frac{(2L )^{2n+1}(n!)^4}{((2n)!)^3}\frac{||H_{2n}||_{\infty}}{(4\beta^{-1}\sigma^2)^{2n}}\,,
    \]
    where $H_{2n}$ is the Hermitian polynomial coming from the $n$-th derivative of the Gaussian. Stirling's approximation for $n!$ and the upper bound of Hermitian polynomials yield  $E_2 \lesssim \frac{d}{2^n}$.
    Thus, $|N_1(y) - N(y)| \lesssim \frac{d}{2^n}$.

    Assuming $P$ and $\tilde{P}$ are computed with the same order of flops, we have $n_{mc} = (n^2d)^{1/d}$, and the error for MC integration becomes
    \[
        |N_2(y) - N(y)| \lesssim \frac{1}{n d^{1/2}}\,.
    \]

    Finally, recalling the iterative relation in \cite{BRWP_2023}, the numerical error in approximating the normalization term $N(x)$ will propagate to all the remaining iterations. Hence, the error in the iteration at infinity will be bounded by the same term.
\end{proof}

To conclude, according to Theorem \ref{Thm_compare_accu}, if we employ TT integration to approximate the normalization term \eqref{def_normal}, the computational accuracy will experience a significant improvement while maintaining the same order of computational complexity compared with MC integration.  

\blue{
We note that a more general theorem comparing the accuracy of TT integration and MC integration can be formulated for broader classes of distributions beyond the diagonal Gaussian distribution. In such cases, an additional error term associated with the tensor train approximation would be introduced. This extra error term is dependent on the computational complexity. To maintain clarity and focus, we present the simpler version here, which effectively illustrates the superiority of TT integration.}

The theorem \ref{Thm_compare_accu} underscores the necessity of applying TT approximation, and the introduced straightforward algorithm which employs TT integration to compute the normalization term in \eqref{def_normal} becomes particularly useful when $T$ is extremely small, and an empirical distribution for $\rho_0$ is acceptable.
However, as revealed in \cite{BRWP_2023}, the aforementioned algorithm is still biased, meaning that the steady state of the generated distribution differs from the target distribution. Therefore, in the next section, we will propose an unbiased new sampling algorithm with the assistance of TT approximation.
 
\section{Tensor Train based Noise-free Sampling Algorithm  (TT-BRWP)}
\label{sec_analysis}
In this section, we propose a new sampling algorithm with the help of a TT algorithm that utilizes a delicate choice of covariance matrix to enhance the accuracy of the BRWP proposed in \cite{BRWP_2023}. We verify the convergence and mixing time of the proposed algorithm in a representative scenario that the distribution we would like to sample is a general Gaussian distribution. 

We recall that to compute the terminal density function in BRWP, the empirical distribution is employed to estimate initial density $\rho_0$. However, this choice, along with the discretization of the particle evolution equation, leads to biased estimations of the sample mean and variance \cite{BRWP_2023}. \blue{Given that, we choose to use the following density estimation which is only implementable when the TT algorithm is employed to approximate the density function $\rho_{0,k}$ at the $k$-th interaction as}
\begin{equation}
\label{def_KDE}
\rho_{0,k}(y) = \frac{1}{M}\sum_{j=1}^M\rho_{0,k,j}(y,x_j) = \frac{1}{M}\sum_{j=1}^M\frac{\exp\bigg(-\frac{1}{2}(y-\tilde{x}_j)^{\ts} H_k^{-1}(y-\tilde{x}_j)\bigg)}{  |H_k|^{1/2} (2\pi)^{d/2}}   \,, \quad \blue{x_j \sim \Pi_k}\,,
\end{equation}
with a special choice of the covariance matrix $H_k$ and $\tilde{x}_j$ is a recalled version of $x_j$ whose definition can be found in \eqref{def_H0}. The distribution $\Pi_k$ has the density function $\rho_k$.  In subsequent analysis, we will derive and justify the explicit choice of $H_k$ and expression for $\tilde{x}_j$, which depends on the sample covariance and parameters $\beta$, $T$. \blue{From our numerical experience and the following analysis of several representative scenarios, we remark that the corrected initial density $\rho_{0,k}$ with $H_k$  will lead to a sampling algorithm with faster convergence and unbiased estimation. The more complete theoretical treatment will be addressed in future works. 
}

Before the analysis, we first present the TT-BRWP algorithm as follows.
\begin{algorithm}[H]
\caption{Tensor train BRWP sampling algorithm (TT-BRWP)}
\label{TT_Algo}
\begin{algorithmic}[1]
\item \textbf{Input}: Given $X_0 = \{x_{0,j}\}_{j=1}^M \in  \mathbb{R}^d$ $\sim \Pi_0$.
\item \textbf{Construction}: Solve backward heat equation by \eqref{exp_int} and use TT crossing to obtain TT approximation for $1/{{\eta}}_0$ in \eqref{Heat_rho}.
\For{$k = 1,2,\cdots$ Number of iterations}
\For{$j = 1,2,\cdots$ Number of samples (parallel)}
\State \textbf{Construct}: \blue{Write $\rho_{0,k,j}$ as in \eqref{def_KDE}, which is a Gaussian distribution.}
\State \textbf{Compute} $\hat{\eta}_0 =   \rho_{0,k,j}/\eta_0$ by Hadamard product.
\State \textbf{Solve} forward heat equation with $\hat{\eta}_0$ to get $\hat{\eta}_T$ and $\nabla\hat{\eta}_T$.
\State \textbf{Compute} $\rho_{T,k,j} = \eta_T\circ \hat{\eta}_T$   and $\nabla \rho_{T,k,j}  = \eta_T\circ \nabla \hat{\eta}_T$. 
\EndFor
\State \textbf{Interpolate} $\nabla \rho_T = \sum_j \nabla \rho_{T,k,j}$ and $\rho_T = \sum_j \rho_{T,k,j}$ on $X_k$  (interpolation on a gridded array).
\State \textbf{Compute} score function $\nabla \log \rho_T = \nabla \rho_T /\rho_T$. 
\State \textbf{Solve} \eqref{particle_evolution} using semi-backward Euler scheme to have $X_{k+1} = X_k - h(\nabla V(X_k)+\beta^{-1}\nabla \log(\rho_T(X_k)))$.  
\EndFor
\item \textbf{Output}:  $X_k = \{x_{k,j}\}_{j=1}^M $ for $k \in \mathbb{N}$.
\end{algorithmic}
\end{algorithm}

\blue{We remark that for each iteration, as indicated on line 5, each $\rho_{0,k,j}$ is a simple Gaussian distribution and for the case $H_k$ is diagonal, each $\rho_{0,k,j}$ is a rank 1 tensor train which implies the following three steps are very fast to compute.}

In the following, section \ref{sec_TT_BRWP_Gaussian} and \ref{sec_TT_BRWP_Bayersian} will verify Algorithm \ref{TT_Algo} by computing its mixing time and deriving continuous analog to show the convergence of the generated samples to a desired stationary distribution when the underlying distribution is assumed to be a Gaussian. In section \ref{sec_TT_numerics}, details of numerical implementations will be addressed.

\subsection{Analysis of TT-BRWP for Gaussian Distribution.}
\label{sec_TT_BRWP_Gaussian}
We first focus on the verification of Algorithm \ref{TT_Algo} when the target distribution is a multivariate Gaussian distribution. We then derive the convergence and mixing time of the proposed algorithm. \blue{In this subsection, for simplicity, we will drop the subscript $k$ in $\rho_{0,k}$ at the $k$-th iteration.}

Let us consider the case that the density function for the target distribution is
\[
\rho^*(x) = \frac{1}{Z}\exp\bigg(- \beta\frac{(x-\mu)^{\ts}\Sigma^{-1}(x-\mu)}{2}\bigg)
\]
which is a multivariate Gaussian distribution. Then the solution of the regularized Wassertain proximal defined in \eqref{regu_PDE} can be written as \cite{kernel_proximal}
\begin{equation}
\label{rho_T_gaussian}
    \rho_T(x) = \int_{\mathbb{R}^d}\rho_0(y) \frac{\exp\bigg(-\frac{\beta}{2}\bigg(\frac{||x-y||_2^2}{2T}+(x-\mu)^{\ts}\Sigma^{-1}(x-\mu)\bigg)\bigg)}{N(y)} dy \,,
    \end{equation}
    where
 \begin{equation}
    N(y) = \int_{\mathbb{R}^d} \exp\bigg(-\frac{\beta}{4}(z-\mu)^{\ts}\Sigma^{-1}(z-\mu)\bigg)\exp\bigg(-\beta\frac{||z-y||_2^2}{4 T}\bigg)dz \,.
\end{equation}

Next, we initialize the algorithm by letting $\Pi_0$ be a Gaussian distribution with mean $\mu_0$ and covariance $\Sigma_0$, which is a practically common choice for real applications. Moreover, we assume that we have sufficiently many random samples from $\Pi_0$ so that $\rho_0$ in \eqref{def_KDE} can be approximated by a continuous convolution. 
\blue{Next, we define $\tilde{x}_j$ and $H_0$ in $\rho_0$ as
\begin{equation}
\label{def_H0}
    H_0 :=  \frac{1}{2}\Sigma_0 - T^2\beta^{-2}\Sigma
_0^{-1} \,,\quad \tilde{x}_j := \frac{x_j - \mu_0}{\sqrt{2}} + \mu_0\,.
\end{equation}
Then, in this case, $\tilde{x}_j$ will be a normal distribution with mean $\mu_0$ and variance $\Sigma_0/2$. We remark for the $k$-th iteration, we simply replace $\mu_0$, $\Sigma_0$ by $\mu_k$, $\Sigma_k$.
}

Now, recall the fact that the convolution of two multivariate Gaussians with mean and covariance $(\mu_1,\Sigma_1)$, $(\mu_2,\Sigma_2)$ will still be a Gaussian distribution with mean and covariance $(\mu_1+\mu_2, \Sigma_1+\Sigma_2)$. Hence, we have
\[
N(y) \sim \exp\bigg(-\frac{\beta}{4}(y-\mu)^{\ts}(\Sigma + T)^{-1}(y-\mu)\bigg)\,, \quad \rho_0(y)\sim\exp\bigg(- \frac{1}{2}(y- {\mu}_0)(H_0+\frac{\Sigma_0}{2} )^{-1}(y-{\mu}_0)\bigg)\,,
\]
where $f(x)\sim g(x)$ denotes $f(x) = Cg(x)$ for some constants $C$ that are independent of $x$.
Substituting the above two expressions into \eqref{rho_T_gaussian}, we derive
\begin{align}
&\rho_{T}(x) \sim \exp\bigg(-\beta\frac{(x-\mu)^{\ts}\Sigma^{-1}(x-\mu)}{4}\bigg)\cdot\notag\\
&\,\,\int_{\mathbb{R}^d}\exp\bigg(-\beta\frac{\frac{||x-y||_2^2}{T}-(y-\mu)^{\ts}(\Sigma + T)^{-1}(y-\mu)}{4} -\frac{(y-\mu_0)(H_0+ \frac{\Sigma_0}{2})^{-1}(y-\mu_0) }{2}\bigg)dy\notag \\
&\qquad\,\,\sim  \exp\bigg(-\frac{1}{2}(x- \mu_{0,T})^{\ts}\Sigma_{0,T}^{-1}(x-\mu_{0,T})\bigg)
\end{align}
where  
\begin{equation}
    \label{var_rhoT}
    \mu_{0,T} = K\mu_0+\frac{\beta}{2} K^T(H_0+\frac{1}{2}\Sigma_0)^{-1}K\mu\,,\quad
\Sigma_{0,T} = 2 T\beta^{-1}K+  K^{\ts}(H_0+\frac{1}{2}\Sigma_0)K\,,
\end{equation}
and $K = (I+T\Sigma^{-1})^{-1}$. Consequently, we employ the discretization of particle evolution equation \eqref{FP_Euler} to have
\blue{\begin{equation}
     X_{1} = X_0-h\nabla V(X_0)-h\beta^{-1}\nabla\log\rho_{T} = (1-h\Sigma^{-1}+h\beta^{-1}\Sigma_{0,T})X_0 - h\beta^{-1}\Sigma_{0,T}^{-1}\mu_{0,T}\,.
\end{equation}}
Hence, it will be clear that $X_1$ and also all $X_k$ will be with Gaussian distributions. Moreover, their means and covariances can be explicitly computed which is presented in the following Lemma similar to proposition 2 in \cite{BRWP_2023}.
\begin{lemma}
\label{Lemma_Gaussian_k}
    Assuming $V(x) =(x-\mu)^{\ts}\Sigma^{-1}(x-\mu)/2$ and $X_{k}$ is a Gaussian random variable with mean $\mu_k$ and covariance $\Sigma_k$, then $X_{k+1}$ will also be a Gaussian with mean and covariance as
    \begin{equation}
        \label{mu_gaussian}
        \mu_{k+1} = (I-h\Sigma^{-1} +h\beta^{-1}\Sigma_{k,T}^{-1})\mu_{k} -h\beta^{-1}\Sigma_{k,T}^{-1}\mu_{k,T} 
    \end{equation}
    and
    \begin{equation}
    \label{var_gaussian}
        \Sigma_{k+1} = (I-h\Sigma^{-1} + h\beta ^{-1}\Sigma_{k,T}^{-1}) \Sigma_{k}(I-h\Sigma^{-1}+h\beta^{-1}\Sigma_{k,T}^{-1}) 
    \end{equation}
    where $\mu_{k,T}$ and $\Sigma_{k,T}$ are given in \eqref{var_rhoT} by replacing \blue{$\Sigma_0$ with $\Sigma_k$ and $H_0$ with $H_k$}.
\end{lemma}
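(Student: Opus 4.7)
The plan is to proceed by induction on $k$, with the inductive step reducing to a closed-form affine-transformation computation. The base case is immediate from the hypothesis that $X_k$ is Gaussian. For the inductive step I assume $X_k \sim \mathcal{N}(\mu_k,\Sigma_k)$ and show that the Euler update \eqref{FP_Euler} produces a Gaussian $X_{k+1}$ whose mean and covariance match \eqref{mu_gaussian} and \eqref{var_gaussian}.

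The first step is to repeat the derivation carried out immediately above the lemma statement, but with $(\mu_0,\Sigma_0)$ replaced throughout by $(\mu_k,\Sigma_k)$. Concretely, I would substitute the KDE representation \eqref{def_KDE} of $\rho_0$ into the kernel formula \eqref{rho_T_gaussian}, use the Gaussian-convolution identity to evaluate $N(y)$ in closed form, complete the square in $y$, and integrate $y$ out over $\mathbb{R}^d$. Collecting the quadratic and linear terms in $x$ in the resulting exponent identifies $\rho_T$ as a (scaled) Gaussian with the mean $\mu_{k,T}$ and covariance parameter $\Sigma_{k,T}$ given by \eqref{var_rhoT} after the substitution $\Sigma_0\mapsto\Sigma_k$, $\mu_0\mapsto\mu_k$.

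The second step is to exploit the Gaussian shape of $\rho_T$: the score function becomes the affine map $\nabla\log\rho_T(x) = -\beta\,\Sigma_{k,T}^{-1}(x-\mu_{k,T})$, and since $\nabla V(x) = \Sigma^{-1}(x-\mu)$ is also affine, the update \eqref{FP_Euler} collapses to an affine transformation
\[
X_{k+1} = A_k X_k + b_k, \qquad A_k = I - h\Sigma^{-1} + h\beta^{-1}\Sigma_{k,T}^{-1},
\]
with $b_k$ gathering the constants contributed by $\mu$ and $\mu_{k,T}$. Because an affine image of a Gaussian is Gaussian, $X_{k+1}$ is Gaussian; taking expectations yields the mean recurrence \eqref{mu_gaussian}, and applying the covariance-of-a-linear-image rule, together with the symmetry of $A_k$, yields \eqref{var_gaussian}.

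The main obstacle is purely algebraic bookkeeping: one needs to track how $H$, $K=(I+T\Sigma^{-1})^{-1}$, and $\Sigma_k$ enter $\mu_{k,T}$ and $\Sigma_{k,T}$ through the convolution in \eqref{rho_T_gaussian}, and then verify that the explicit forms of $A_k\mu_k + b_k$ and $A_k\Sigma_k A_k^{\ts}$ agree with the expressions stated in the lemma after simplification. No conceptual novelty is needed beyond the convolution identity for Gaussians, completion of the square, and the standard affine-map rule, so the proof is essentially a careful bookkeeping exercise that parallels the $\rho_0\to\rho_T$ calculation already sketched right before the lemma.
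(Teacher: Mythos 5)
Your approach matches the paper's: the Lemma has no separate proof but is the inductive restatement of the explicit $k=0$ computation carried out in the text immediately above it, where $\rho_T$ is evaluated in closed form via Gaussian convolutions and completion of the square, the resulting Gaussian shape makes the score function affine, and the explicit Euler step \eqref{FP_Euler} becomes an affine map of $X_k$, so the affine-image rule for Gaussians gives the mean and covariance recursions. Your outline reproduces all of these steps, so the route is essentially the paper's.

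One point worth flagging, because it affects whether your argument literally produces the displayed formulas. The affine-image rule gives $\Sigma_{k+1} = A_k\,\Sigma_k\,A_k^{\ts}$, i.e., with the covariance $\Sigma_k$ of $X_k$ sandwiched between the two copies of $A_k = I - h\Sigma^{-1} + h\beta^{-1}\Sigma_{k,T}^{-1}$. Equation \eqref{var_gaussian} as printed instead has $\Sigma_{k,T}$ in the middle, which is not what the affine rule delivers and which, for generic $H$, is not equal to $\Sigma_k$. This appears to be a typo in the paper: the later scalar specialization \eqref{sigma_iteraion}, which the paper states is obtained "by simplifying Lemma \ref{Lemma_Gaussian_k}," ends in $\sigma_k^2$ rather than $\sigma_{k,T}^2$, i.e., it is the $A_k\Sigma_k A_k^{\ts}$ form. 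So your derivation is correct, but the sentence "yields \eqref{var_gaussian}" should read that it yields $A_k\Sigma_k A_k^{\ts}$ and that \eqref{var_gaussian} should be read with $\Sigma_k$ in the middle. (There are a couple of other small inconsistencies in the surrounding formulas — e.g., the missing inverse on $\Sigma_{k,T}$ in the first factor of \eqref{mu_gaussian}, and the extra factor of $\beta$ in the displayed form of $\rho_T$ just before \eqref{var_rhoT}, which is what forces the $\beta^{-1}$ in $A_k$ — and it is worth tracking the $\beta$'s carefully when you do the "bookkeeping" step, since \eqref{var_rhoT} is only consistent with \eqref{sigma_iteraion} if $\rho_T \sim \exp\bigl(-\tfrac12 (x-\mu_{k,T})^{\ts}\Sigma_{k,T}^{-1}(x-\mu_{k,T})\bigr)$ without the $\beta$.)
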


Now, we are ready to justify our choice of the covariance matrix $H_{k}$ in \eqref{def_H0} at the $k$-th iteration, which will be
\begin{equation}
\label{def_bandwidth}
    H_k
    = \frac{1}{2}\Sigma_k - \beta^{-2}T^2\Sigma
_k^{-1}  \quad \Rightarrow \quad H_k + \frac{\Sigma_k}{2} = \Sigma_k - \beta^{-2}T^2\Sigma_k^{-1}\,,
\end{equation}
where $\Sigma_k$ is the covariance matrix for particles $X_k$ in the $k$-th iteration. Then by \eqref{var_gaussian}, the steady state of covariance for $\rho_k$ at $k\rightarrow \infty$ denoted as $\Sigma_{\infty}$ will satisfy
\begin{equation}
\label{steady_state}
I + h(\beta^{-1} \Sigma_{\infty,T}^{-1}-\Sigma^{-1}) = I \Rightarrow  \Sigma-T^2\Sigma^{-1} = \beta\Sigma_{\infty}-T^2\beta^{-1}\Sigma_{\infty}^{-1}\,,
\end{equation}
where we have substituted the expression for $\Sigma_{\infty,T}$ by letting $k\rightarrow \infty$ in \eqref{var_rhoT}. We note $\Sigma_{\infty} = \beta^{-1}\Sigma$ is a solution to the above relationship \eqref{steady_state}. Moreover, under the assumption that $\Sigma$ is a positive definite and commutes with $\Sigma_{\infty}$, $\Sigma_{\infty} = \beta^{-1}\Sigma$ will also be the unique positive definite solution to \eqref{steady_state}. In other words, the steady state of $\Pi_k$ will have the same covariance as the target distribution which is $\beta^{-1}\Sigma$.  To allow $H_k$ to be positive definite,  $T$ shall be sufficiently small whose choice will be presented rigorously in the following analysis.

To better understand the convergence of $\Sigma_k$ and $\mu_k$, we first consider the simplified case where $\Sigma_0$ and $\Sigma$ commute, indicating they share the same eigenspace. Moreover, we assume $\mu= 0$ as the general case can be deduced from translation. Then from \eqref{var_gaussian}, we observe that $\Sigma_k$ also commutes with $\Sigma$ for all $k$ in this case.

We introduce the notations $\sigma^{(j)}$, $\sigma^{(j)}_{k,T}$ and $\sigma_k^{(j)}$ as the $j$-th eigenvalue of $\Sigma$, $\Sigma_{k,T}$, and $\Sigma_k$ respectively. Moreover, we write $m_k^{(j)}$ as the $j$-th entry in $\mu_k$. In the following, we will skip the superscript if the result holds for all $j$.
By simplifying expressions \eqref{var_rhoT} 
we have
\begin{equation}
    \sigma_{k,T}^2 = \frac{2T\beta^{-1}}{1+T/\sigma^{2}} + \frac{ \sigma_k^2-T^2\beta^{-2}/\sigma_k^2}{(1+T/\sigma^2)^2}
\end{equation}
and then by Lemma \ref{Lemma_Gaussian_k}, the evolution of $m_k$ and $\sigma_k^2$ can be computed as follows
\begin{align}
\label{mu_iteration}
    m_{k+1} = &\bigg(1-\sigma^{-2}h + \frac{h \beta^{-1} \sigma^{-2}T(1+\sigma^{-2}T)}{\sigma_k^2- T^2\beta^{-2}\sigma_k^{-2} + 2T\beta^{-1}(1+\sigma^{-2}T)}\bigg)m_k\,,\\
\label{sigma_iteraion}
    \sigma_{k+1}^2 = &\bigg(1-\sigma^{-2}h + \frac{h \beta^{-1}(1+\sigma^{-2}T)^2}{\sigma_k^2-T^2\beta^{-2}\sigma_k^{-2}+2T\beta^{-1}(1+\sigma^{-2}T)}\bigg)^2 \sigma_{k}^2\,,
\end{align}
and the equation for the steady state $\sigma_{\infty}$ in \eqref{steady_state} simplifies to
\begin{equation}
    \sigma^{2} - T^2\sigma^{-2} = \beta\sigma_{\infty}^2 - T^2\beta^{-1}\sigma_{\infty}^{-2}  
\end{equation}
with the only positive solution being $\sigma_{\infty}^2 = \beta^{-1}\sigma^2$, indicating the only steady state of $\Sigma_k$ is exactly $\Sigma$.

To characterize the behavior of the evolution of sample distribution $\Pi_k$ we get at the $k$-th iteration, we introduce the total variation $d_{TV}(P,Q)$ between two distributions $P$ and $Q$ over $\mathbb{R}^d$ with probability density functions $p$ and $q$:
\begin{equation}
    d_{TV}(P,Q) = \int_{\mathbb{R}^d}|p(x)-q(x)|dx\,.
\end{equation}
We also recall the result about the total variation between two Gaussian distributions with the same mean from \cite{TV_Gaussian}
\begin{equation}
\label{TV_Gau}
    d_{TV}(\mathcal{N}(\mu,\Sigma_1),\mathcal{N}(\mu,\Sigma_2)) = \frac{3}{2}\min\left\{1,\sqrt{\sum_{i=1}^d\lambda_i^2}\right\}\,,
\end{equation}
where $\lambda_i$ are eigenvalues of $\Sigma_1^{-1}\Sigma_2 - I$.

Next, we introduce the mixing time concerning the target distribution $\Pi^*$ as
\begin{equation}
    t_{\text{mix}}(\varepsilon,\Pi_0) = \min_k \{k | d_{TV}(\Pi_k, \Pi^*) \leq \varepsilon\}\,.
\end{equation}
 
Now, we present a theorem regarding the mixing time of the proposed TT-BRWP, assuming the initial and exact means are equal. The result is comparable with Theorem 2 in \cite{BRWP_2023} with slightly different assumptions and a simplified proof. The evolution of sample means $m_k$ will be discussed in Section \ref{sec_TT_BRWP_Bayersian}.

Before the main theorem, we prove the following technical lemma, which will be critical in the proof of the next theorem.
\begin{lemma}
\label{Tech_lemma}
Let $T,h\in [0,\sigma^2/4]$, $\beta^{1/2} \zeta\in[\sigma/2 , 3\sigma/2]$, and $\beta\geq 1$, the function 
\[
    f(\zeta) := 1-h\left[\sigma^{-2} +\beta^{-1}(1+ \sigma^{-2} T)^2 \frac{3T^2\beta^{-2}/\zeta^2+\zeta^2-2T\beta^{-1}(1+\sigma^{-2} T )}{(\zeta^2-T^2\beta^{-2}/\zeta^2+2T\beta^{-1}(1+ \sigma^{-2}T))^2}\right]
\]
satisfies $|f(\zeta)|\leq 1-h/\sigma^2$.
\end{lemma}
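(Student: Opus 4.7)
The plan is to rewrite $f(\zeta) = (1 - h/\sigma^2) - h \cdot u^2 N/D^2$ with the abbreviations $u = 1 + \sigma^{-2}T$, $N = 3T^2/\zeta^2 + \zeta^2 - 2Tu$, and $D = \zeta^2 - T^2/\zeta^2 + 2Tu$. The claim $|f(\zeta)| \leq 1 - h/\sigma^2$ is then equivalent to the double inequality
\[
0 \;\leq\; h\,\frac{u^2 N}{D^2} \;\leq\; 2 - \frac{2h}{\sigma^2}.
\]

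The left inequality reduces to $N \geq 0$ since $D^2 \geq 0$. By AM--GM, $3T^2/\zeta^2 + \zeta^2 \geq 2\sqrt{3}\,T$, so $N \geq 2T(\sqrt{3} - u)$; the assumption $T \leq \sigma^2/4$ gives $u \leq 5/4 < \sqrt{3}$, so this is non-negative.

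For the right inequality I would non-dimensionalize with $\xi = \zeta^2/\sigma^2 \in [1/4, 9/4]$ and $\alpha = T/\sigma^2 \in [0, 1/4]$, so that the quantity becomes $\sigma^2 u^2 N/D^2 = u^2 \tilde{N}/\tilde{D}^2$ with $\tilde{N} = 3\alpha^2/\xi + \xi - 2\alpha u$ and $\tilde{D} = \xi - \alpha^2/\xi + 2\alpha u$. Two simple algebraic identities do most of the work:
\[
\tilde{D} + \tilde{N} = 2\xi + \frac{2\alpha^2}{\xi}, \qquad \tilde{D} - \tilde{N} = 4\alpha\!\left(u - \frac{\alpha}{\xi}\right) \geq 0,
\]
the second being non-negative because $u \geq 1 \geq \alpha/\xi$ on the domain. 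Hence $0 \leq \tilde{N} \leq \tilde{D}$, giving $\tilde{N}/\tilde{D}^2 \leq 1/\tilde{D}$. A direct check using $2u \geq 2$ and $\alpha/\xi \leq 1$ also yields $\tilde{D} \geq \xi + \alpha$. Combining the two,
\[
\frac{u^2\,\tilde{N}}{\tilde{D}^2} \;\leq\; \frac{u^2}{\tilde{D}} \;\leq\; \frac{(1+\alpha)^2}{\xi + \alpha}.
\]
The final expression is decreasing in $\xi$, hence is maximized at $\xi = 1/4$, and a one-variable derivative check on $(1+\alpha)^2/(1/4 + \alpha)$ (whose sign is that of $\alpha - 1/2$) shows it is decreasing in $\alpha$ on $[0,1/4]$. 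Its maximum is therefore $4$, attained at $(\alpha,\xi) = (0,1/4)$, so that $h\,u^2 N/D^2 \leq 4h/\sigma^2 \leq 1 \leq 2 - 2h/\sigma^2$ whenever $h \leq \sigma^2/4$.

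The step I expect to be the main obstacle is this uniform upper bound: $\tilde{D}$ degenerates toward $\tilde{N}$ near the corner $(\alpha,\xi) = (0,1/4)$, so crude separate estimates on numerator and denominator overshoot. The pair of identities for $\tilde{D} \pm \tilde{N}$ is what makes the computation clean, by trading $\tilde{N}/\tilde{D}^2$ for the simpler $1/\tilde{D}$ and reducing the remainder to elementary monotonicity.
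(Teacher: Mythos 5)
Your proof is correct and takes a genuinely different route from the one in the paper. The paper's treatment of the upper bound $f(\zeta)\leq 1-h/\sigma^2$ is essentially the same as yours (both reduce to $N\geq 0$, which follows from the minimum of $3T^2/\zeta^2+\zeta^2$ being $2\sqrt{3}T$ and $u\leq 5/4<\sqrt{3}$). For the lower bound, however, the paper argues it suffices to show $\sigma^2 u^2 N\leq 6D^2$, takes square roots, passes through the intermediate quantity $\sigma u(\zeta-Tu/\zeta)$, and then verifies positivity of an auxiliary function $g(T)=\sqrt{6}\zeta^2 D-\sigma u\zeta(\zeta^2-Tu)$ via $g(0)>0$ and a derivative sign computation. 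As written, the paper's chain claims $N^{1/2}\leq \zeta-Tu/\zeta$, but in fact $\zeta^2 N=(\zeta^2-Tu)^2+T^2(3-u^2)$ with $T^2(3-u^2)\geq 0$ on the stated parameter range, so the inequality runs the other way; the paper's intermediate step therefore does not close the argument. Your route sidesteps this entirely: the non-dimensionalization $(\alpha,\xi)$ together with the exact identities
\begin{align*}
\tilde{D}+\tilde{N}&=2\xi+\tfrac{2\alpha^2}{\xi}\,, &
\tilde{D}-\tilde{N}&=4\alpha\bigl(u-\tfrac{\alpha}{\xi}\bigr)\geq 0\,,
\end{align*}
gives $0\leq\tilde{N}\leq\tilde{D}$, hence $u^2\tilde{N}/\tilde{D}^2\leq u^2/\tilde{D}\leq (1+\alpha)^2/(\xi+\alpha)\leq 4$, which is both cleaner (no square roots, no derivative of an auxiliary polynomial) and slightly sharper than the paper's target constant $6$. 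In short, your argument is correct, self-contained, and in my reading actually repairs a gap in the published proof of this lemma.
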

\begin{proof}
   Firstly, to show $f(\zeta)\leq  1-h/\sigma^2$, we observe that it suffices to show
\[
3T^2\beta^{-2}/\zeta^2+\zeta^2-2T\beta^{-1}(1+\sigma^{-2} T ) > 0
\]
for $\zeta$ such that $|\beta^{1/2}\zeta-\sigma|\leq \sigma/2$. This is equivalent to exploring the minimum value of a quadratic polynomial in $\zeta^2$, and the desired condition will boil down to
\[
T^2(3-(1+T/\sigma^2)^2) \geq T^2
\]
which is true by our choice of $T$. 

Secondly, we show the given condition on $T$ and $h$ ensures that $f(\zeta)>0$. Rearranging terms, $f(\zeta)>0$ is equivalent to
\begin{equation}
    \frac{\beta}{(1+\sigma^{-2}T)^2}\bigg(\frac{1}{h}-\frac{1}{\sigma^2}\bigg) \geq  \frac{3T^2\beta^{-2}/\zeta^2+\zeta^2-2T\beta^{-1}(1+\sigma^{-2} T )}{(\zeta^2-T^2\beta^{-2}/\zeta^2+2T\beta^{-1}(1+ \sigma^{-2}T))^2}\,.
\end{equation}
We note that since $\beta^{1/2}\zeta\in [\sigma/2,3\sigma/2]$, an upper bound for the right-hand side can be obtained as
\[
\frac{3T^2\beta^{-2}/\zeta^2+\zeta^2-2T\beta^{-1}(1+\sigma^{-2} T )}{(\zeta^2-T^2\beta^{-2}/\zeta^2+2T\beta^{-1}(1+ \sigma^{-2}T))^2} \leq \frac{12T^2/(\beta\sigma^2)+9\sigma^2/(4\beta)}{(\sigma^2/(4\beta)- 4T^2/(\beta\sigma^2))^2}\,,
\]
where we used the fact that $\beta\zeta^2\geq T^2$. Then since $\beta\geq 1$, we now note it suffices to show
\begin{equation}
      \frac{1}{(1+\sigma^{-2}T)^2}\bigg(\frac{1}{h}-\frac{1}{\sigma^2}\bigg) \geq \frac{12T^2/(\sigma^2)+9\sigma^2/4}{(\sigma^2/4- 4T^2/\sigma^2)^2}\,.
\end{equation}
Then we will observe the inequality holds after substituting the choice of $T,h\leq \sigma^2/4$.

\end{proof}

\begin{theorem}
\label{Thm_gaussian}
Let $V(x) = x^{\ts}\Sigma^{-1}x/2$ and $\sigma_m$ be the smallest eigenvalue of ${\Sigma}$. For distribution $\Pi_k$ evolved following Algorithm \ref{TT_Algo} with $T,h\in [0,\sigma_m^2/4]$, assuming $\Sigma_0$ and $\Sigma$ commute and $|\beta^{1/2}\sigma^{(j)}_0-\sigma^{(j)}|\leq \sigma^{(j)}/2$ for all $j$, we have
\[
||\Sigma_k-\beta^{-1}\Sigma||_F \leq (1-\delta)^k||\Sigma_0-\beta^{-1}\Sigma||_F\,,
\]
where $\delta = h/\sigma_m^2$.
Moreover, if we assume $\mu_0 = 0$, the total variation is bounded by
\[
d_{\text{TV}}(\Pi_k,\Pi^*)   \leq \frac{3}{2}C\sqrt{d}(1-\delta)^k\,,
\]
and the mixing time is given as
\[
t_{mix}(\varepsilon,\Pi_0)  = \mathcal{O}\left(\log (d/\epsilon)/\log(1-h/\sigma_m^2) \right) 
\]
where the constant $C$ depends on $\Sigma$, $\Sigma_0$, and $T$.
\end{theorem}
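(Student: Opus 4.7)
The plan is to reduce the matrix iteration to a family of scalar fixed-point iterations (one per eigenvalue) and then extract the Frobenius, total-variation, and mixing-time bounds in turn. Since $\Sigma_0$ commutes with $\Sigma$ and the update \eqref{var_gaussian} is a rational function of $\Sigma_k$ and $\Sigma$, induction shows every $\Sigma_k$ commutes with $\Sigma$, so they share an eigenbasis. Writing $\sigma := \sigma^{(j)}$ and $\sigma_k := \sigma_k^{(j)}$, it therefore suffices to analyze the scalar recursion \eqref{sigma_iteraion} in each direction; the hypothesis $|\sigma_0 - \sigma| \leq \sigma/2$ together with $T, h \leq \sigma^2/4$ guarantees that Lemma \ref{Tech_lemma} is applicable.

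First I would rewrite \eqref{sigma_iteraion} as $\sigma_{k+1} = \psi(\sigma_k)$, where
\[
\psi(\zeta) := \zeta\left(1 - h\sigma^{-2} + \frac{h(1+\sigma^{-2}T)^2}{\zeta^2 - T^2\zeta^{-2} + 2T(1+\sigma^{-2}T)}\right).
\]
Using the identity $\sigma^2 + 2T + T^2\sigma^{-2} = \sigma^{-2}(\sigma^2+T)^2$, a short computation yields $\psi(\sigma) = \sigma$, confirming that $\sigma$ is a fixed point. Differentiating $\psi$ via the quotient rule and simplifying $D(\zeta) - \zeta D'(\zeta)$ for $D(\zeta) := \zeta^2 - T^2\zeta^{-2} + 2T(1+\sigma^{-2}T)$ reproduces exactly the function $f(\zeta)$ of Lemma \ref{Tech_lemma}, giving $|\psi'(\zeta)| \leq 1 - h/\sigma^2 = 1 - \delta$ for all $\zeta \in [\sigma/2, 3\sigma/2]$.

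Next, an induction on $k$ using the mean value theorem establishes both the interval invariance $\sigma_k \in [\sigma/2, 3\sigma/2]$ and the contraction $|\sigma_{k+1} - \sigma| \leq (1-\delta)|\sigma_k - \sigma|$, since each step strictly shrinks the distance to the fixed point. Iterating gives $|\sigma_k^{(j)} - \sigma^{(j)}| \leq (1-\delta)^k |\sigma_0^{(j)} - \sigma^{(j)}|$. To pass from eigenvalue differences to the Frobenius norm of $\Sigma_k - \Sigma$, I would factor $(\sigma_k^{(j)})^2 - (\sigma^{(j)})^2 = (\sigma_k^{(j)} - \sigma^{(j)})(\sigma_k^{(j)} + \sigma^{(j)})$, observe that the sum factor lies in $[3\sigma^{(j)}/2, 5\sigma^{(j)}/2]$ so its ratio to $\sigma_0^{(j)} + \sigma^{(j)}$ is controlled by a universal constant, and then sum over $j$ to deduce the claimed geometric decay of $\|\Sigma_k - \Sigma\|_F$.

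For the total-variation bound, the mean recursion \eqref{mu_iteration} with $\mu_0 = 0$ forces $\mu_k = 0$ for every $k$, so $\Pi_k$ and $\Pi^*$ share the mean $0$ and \eqref{TV_Gau} applies. Commutativity of $\Sigma_k$ and $\Sigma$ implies that the eigenvalues of $\Sigma^{-1}\Sigma_k - I$ are $\lambda_i = ((\sigma_k^{(i)})^2 - (\sigma^{(i)})^2)/(\sigma^{(i)})^2$, each bounded by $C(1-\delta)^k$ by the previous step, so $\sqrt{\sum_i \lambda_i^2} \leq C\sqrt{d}(1-\delta)^k$ and the mixing-time estimate then comes from solving $\tfrac{3}{2}C\sqrt{d}(1-\delta)^k \leq \varepsilon$ for $k$. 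The only real obstacle is Step 2: the algebraic identification of $\psi'(\zeta)$ with the function $f(\zeta)$ of Lemma \ref{Tech_lemma} is tedious but mechanical, and everything afterwards is elementary induction together with linear algebra on commuting symmetric matrices.
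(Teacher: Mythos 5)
Your proposal reproduces the paper's proof in its essential structure: commutativity reduces the covariance update to the scalar fixed-point iteration $\phi$ of \eqref{iterative_fun}; differentiating $\phi$ and invoking Lemma \ref{Tech_lemma} gives the contraction $|\phi'(\zeta)|\leq 1-\delta$ on $[\sigma/2,3\sigma/2]$; and combining the resulting eigenvalue convergence with \eqref{TV_Gau} yields the total-variation and mixing-time bounds. You are a bit more careful than the paper in two places that are genuinely needed but left implicit: (i) verifying that $\sigma$ is a fixed point and that $[\sigma/2,3\sigma/2]$ is invariant under $\phi$, without which the mean-value argument cannot be iterated, and (ii) the passage from the contraction of $\sigma_k^{(j)}$ to that of $\|\Sigma_k-\Sigma\|_F$, where the factorization $(\sigma_k^{(j)})^2-(\sigma^{(j)})^2=(\sigma_k^{(j)}-\sigma^{(j)})(\sigma_k^{(j)}+\sigma^{(j)})$ introduces a multiplicative constant of order one (bounded by $5/3$ on the stated interval) which the theorem's clean Frobenius bound, and the paper's one-line ``hence we show the convergence of $\Sigma_k$,'' quietly suppress.
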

\begin{proof}
Under the assumption that $\Sigma_0$ and $\Sigma$ commute and share the same eigenspaces, it is sufficient to focus on the evolution of the $j$-th eigenvalue of $\Sigma_k$ for each iteration which is given by \eqref{sigma_iteraion} as
\begin{equation}
\sigma_{k+1}  = \bigg(1-h \sigma^{-2} + h\frac{ \beta^{-1}(1+\sigma^{-2} T)^2}{\sigma_{k}^2-T^2\beta^{-2}\sigma_{k}^{-2}+2T\beta^{-1}(1+\sigma^{-2}T)}\bigg)\sigma_k\,,
\end{equation}
where we skip the superscript $j$ to simplify notations.
We regard the evolution of $\sigma_k$ as a fixed-point iteration with the iterative function
\begin{equation}
\label{iterative_fun}
\phi(\zeta)  = \bigg(1-h\sigma^{-2} + h\frac{\beta^{-1}(1+\sigma^{-2} T)^2}{\zeta^2-T^2\beta^{-2}/\zeta^2+2T\beta^{-1}(1+\sigma^{-2}T)}\bigg)\zeta\,.
\end{equation}
The derivative of $\phi$ with respect to $\zeta$ is
\begin{equation}
\phi'(\zeta) = 1-h\bigg[\sigma^{-2} +\beta^{-1}(1+ \sigma^{-2} T)^2 \frac{3T^2\beta^{-2}/\zeta^2+\zeta^2-2T\beta^{-1}(1+\sigma^{-2} T )}{(\zeta^2-T^2\beta^{-2}/\zeta^2+2T\beta^{-1}(1+ \sigma^{-2}T))^2}\bigg] \,.
\end{equation}
By Lemma \ref{Tech_lemma}, $|\phi'(\zeta)|\leq 1-\delta$, showing that $\sigma_k$ converges linearly with rate $1-\delta$. Hence, we show the convergence of $\Sigma_k$.

Leveraging the TV norm between two Gaussians in \eqref{TV_Gau}, we derive
\begin{equation}
d_{\text{TV}}(\Pi_k, \Pi^*) \leq \frac{3}{2}C\sqrt{d}(1-\delta)^k\,,\quad C = \min\{1,\max_j\{(\sigma^{(j)})^{-1}(\sigma^{(j)}-\sigma^{(j)}_0)\}\}\,.
\end{equation}
Furthermore, the mixing time can be derived by combining the above estimate on the TV norm and its definition.  
\end{proof}

We remark that as $\beta$ becomes larger, the initial guess should be more accurate to converge to the correct variance. 

Before we proceed to the more general case, we would like to compare the convergence of variance with the initial density defined in \eqref{def_KDE} depicted in the above theorem with the case of using the empirical distribution as $\rho_0$ proposed in \cite{BRWP_2023}. The result in the next figure comes from the iterative function we formulated in \eqref{iterative_fun}.
\begin{figure}[H]
    \centering
    \includegraphics[scale = 0.33]{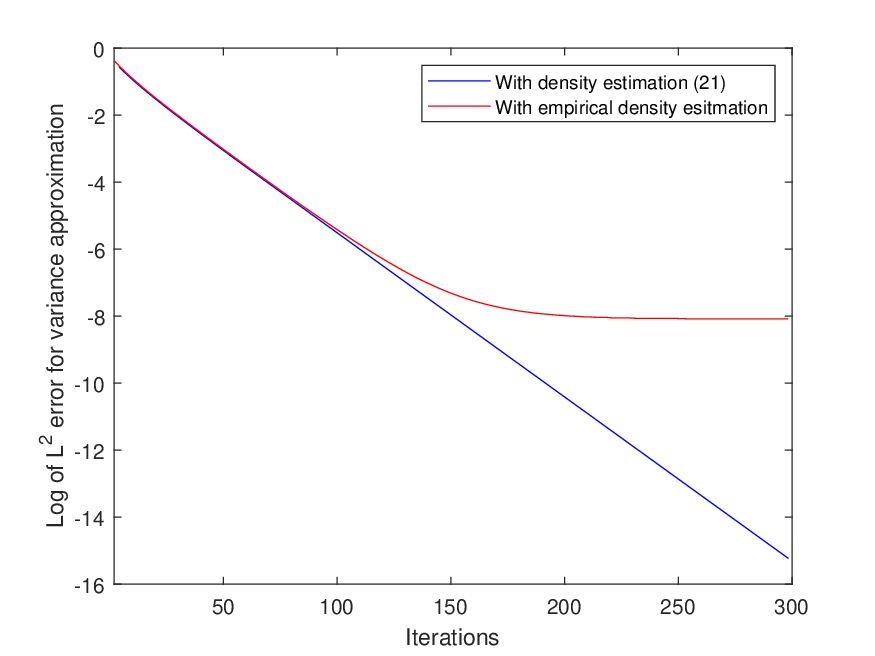}
    \includegraphics[scale = 0.33]{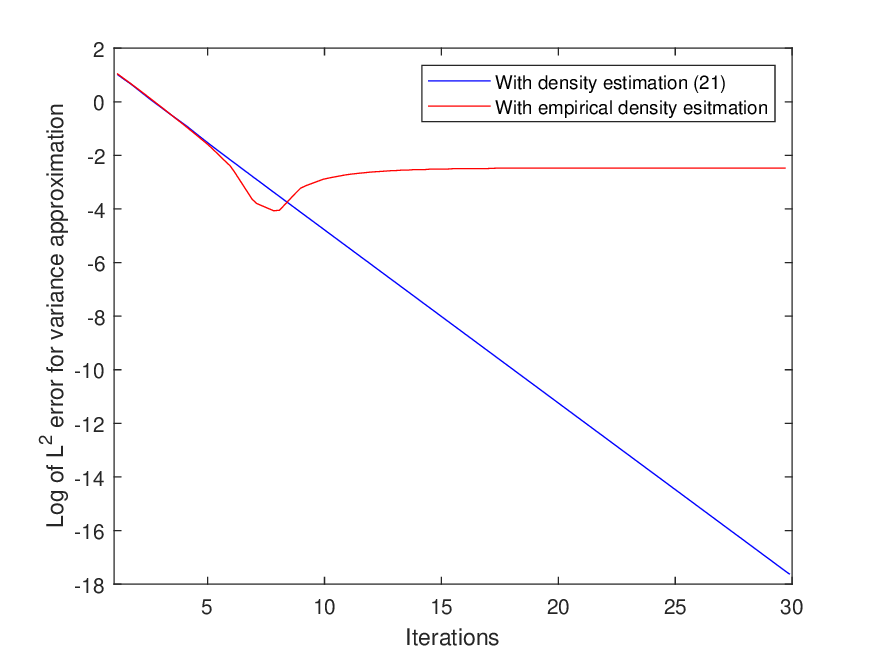}
        \includegraphics[scale = 0.33]{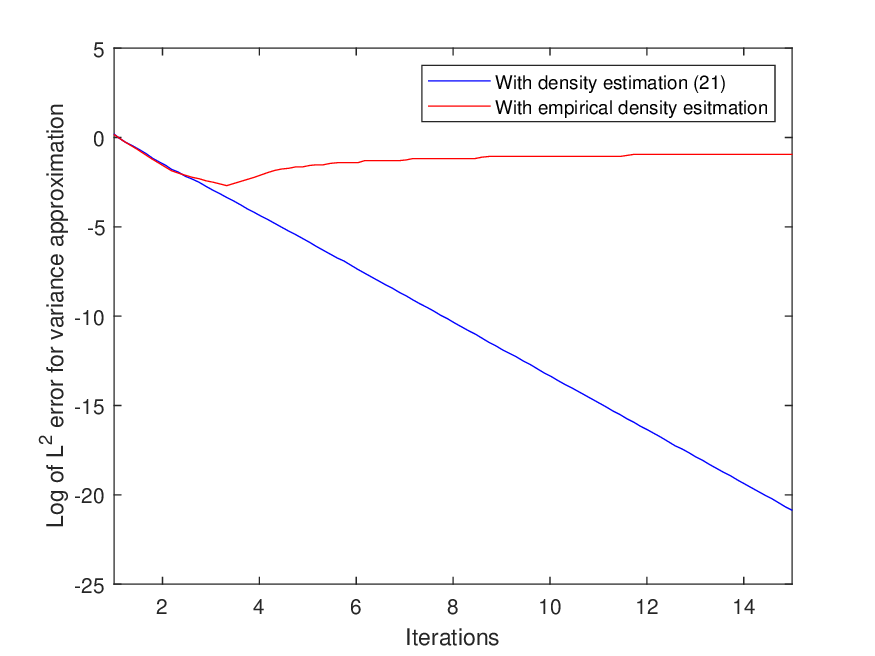}
    \caption{\blue{Logarithm of the approximation error for variance versus iteration using empirical distribution (red) and density estimation defined in \eqref{def_KDE} (blue) with $\sigma = 2,0.5,0.25$ (from left to right), $T = 0.1$, $h=0.1$. }}
    \label{fig_var_error}
\end{figure}
From Fig.\,\ref{fig_var_error}, we can observe that the unbiased nature of Algorithm \ref{TT_Algo} by employing a delicate choice of the covariance matrix for $\rho_0$ which improves the accuracy of variance approximation, especially for the case where $\sigma$ is relatively small. Moreover, we also hope to numerically explore the influence of parameter $T$ on the rate of convergence which is shown in the following figure with the help of iterative function \eqref{iterative_fun}.
\begin{figure}[H]
    \centering
    \includegraphics[scale = 0.5]{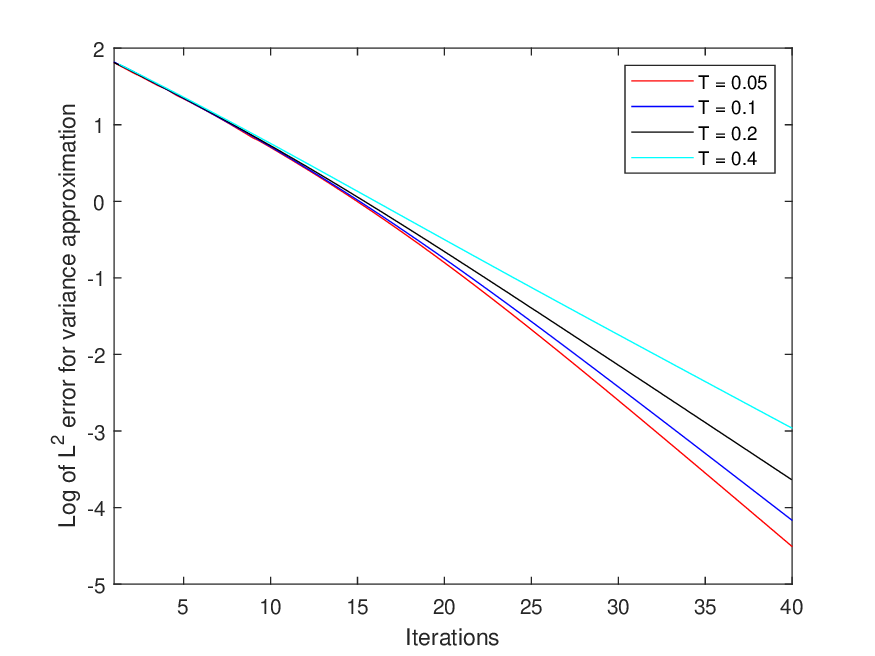}
    \includegraphics[scale = 0.5]{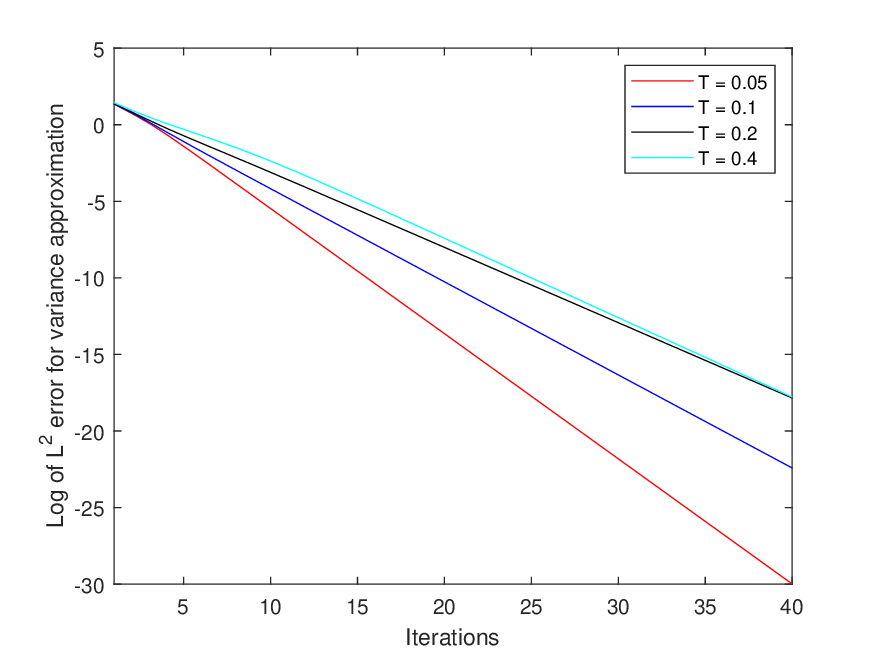}
     \caption{Logarithm of the approximation error of Algorithm \ref{TT_Algo} for variance versus iterations using different terminal time $T$ with $T= 0.05,0.1,0.2,0.4$, $h=0.1$, $\sigma = 0.5$ (Left) and $1$ (Right). }
    \label{fig_T_error}
\end{figure}
From Fig.\,\ref{fig_T_error}, we observe that as $T$ becomes smaller, a slightly faster convergence can be achieved. However, as $T$ becomes smaller, the integration in \eqref{def_normal} requires more nodal points to obtain a satisfactory numerical approximation. Hence, we will choose a fixed $T$ empirically in our following numerical experiments. 

Next, we void the assumption that $\Sigma_0$ and $\Sigma$ commute with each other and derive a continuous analog of the evolution of $\Sigma_k$ to further demonstrate its convergence property. First, from the results in Theorem \ref{Thm_gaussian} and numerical experiments in Fig.\,\ref{fig_T_error}, the algorithm is interesting when $T$ and $h$ are both small. Hence, it is reasonable to drop high-order terms in $h$ and $T$.

Let us revisit the iterative relationship about $\Sigma_k$ in \eqref{var_gaussian} and notice that the evolution of the covariance matrix can be written as
\begin{align}
\label{eqn_approximate_Sigma}
    \Sigma_{k+1} = &\Sigma_k - h(\Sigma^{-1}\Sigma_k+\Sigma_k\Sigma^{-1} - \beta^{-1}\Sigma_{k,T}^{-1}\Sigma_k-\beta^{-1}\Sigma_k\Sigma_{k,T}^{-1}) + \mathcal{O}(h^2)\,.
\end{align}
Then recalling $H_k+\Sigma_k/2 = \Sigma_k -  \beta^{-2}T^2\Sigma_k^{-1}$ and by the Neumann series on \eqref{var_rhoT}, we notice that
\begin{align*}
      &\Sigma_{k,T}^{-1} =   (2T\beta^{-1}+\Sigma_k-T\Sigma^{-1}\Sigma_k - T\Sigma_k\Sigma^{-1} + \mathcal{O}(T^2))^{-1} \\
      = &  \Sigma_k^{-1}(I + T(\Sigma^{-1}+\Sigma_k\Sigma^{-1}\Sigma_k^{-1}-2\beta^{-1}\Sigma_k^{-1}))+ \mathcal{O}(T^2)\,.
\end{align*}
Substituting this into \eqref{eqn_approximate_Sigma}, we arrive at
\begin{align}
    \Sigma_{k+1} 
    =& \Sigma_k + \mathcal{O}(h^2)+\mathcal{O}(T^2) \\
    -
    &h\big[\Sigma_k\Sigma^{-1} + \Sigma^{-1}\Sigma_k-2\beta^{-1} - T\beta^{-1}(2\Sigma^{-1}+\Sigma_k^{-1}\Sigma^{-1}\Sigma_k +\Sigma_k \Sigma^{-1}\Sigma_k^{-1}- 4\beta^{-1}\Sigma_k^{-1})\big]\notag \,.
\end{align}
Now, let $h\rightarrow 0$ and consider the continuous analog of the covariance matrix as $\Sigma_t$ depends on time $t$. This leads to
\begin{equation}
\label{cont_sigma_de}
    \frac{d\Sigma_t}{d t} = - \Sigma_t\Sigma^{-1}-\Sigma^{-1}\Sigma_t + 2\beta^{-1} + T\beta^{-1}(2\Sigma^{-1}+\Sigma_t^{-1}\Sigma^{-1}\Sigma_t +\Sigma_t \Sigma^{-1}\Sigma_t^{-1}- 4\beta^{-1}\Sigma_t^{-1})+\mathcal{O}(T^2)\,.
\end{equation}
To further explore the convergence of $\Sigma_t$, we consider the Frobenius norm of $(\Sigma_t-\beta^{-1}\Sigma)^{\ts}(\Sigma_t-\beta^{-1}\Sigma)$, which is equivalent to the trace of the matrix. By noting the factorization 
\begin{align*}
&2\Sigma^{-1}+\Sigma_t^{-1}\Sigma^{-1}\Sigma_t +\Sigma_t \Sigma^{-1}\Sigma_t^{-1}- 4\beta^{-1}\Sigma_t^{-1} \\
= &\Sigma_t^{-1}\Sigma^{-1}(\Sigma_t-\beta^{-1}\Sigma) + (\Sigma_t-\beta^{-1}\Sigma)\Sigma^{-1}\Sigma_t^{-1}+2\Sigma^{-1}(\Sigma_t-\beta^{-1}\Sigma)\Sigma_t^{-1}\,,
\end{align*}
we can derive
\begin{align}
    &\frac{d||(\Sigma_t-\beta^{-1}\Sigma)^2||_F}{d t} = 
  \Tr\{2(\Sigma_t-\beta^{-1}\Sigma)^T\frac{d\Sigma_t}{dt}\}
    \\=&\Tr\{(\Sigma_t-\beta^{-1}\Sigma)(-4\Sigma^{-1}+T\beta^{-1}\Sigma^{-1}\Sigma_t^{-1}+T\beta^{-1}\Sigma_t^{-1}\Sigma^{-1})(\Sigma_t-\beta^{-1}\Sigma) \\
    &+  2T\beta^{-1} (\Sigma_t-\beta^{-1}\Sigma) \Sigma^{-1} (\Sigma_t-\beta^{-1}\Sigma)\Sigma_t^{-1}\}+\mathcal{O}(T^2)\notag \\
    \leq& -4\Tr\{(\Sigma_t-\beta^{-1}\Sigma)\Sigma^{-1}  (\Sigma_t-\beta^{-1}\Sigma)\} (I-T\beta^{-1}\rho\{ \Sigma_t^{-1}\})+\mathcal{O}(T^2)\,,
\end{align}
where we have used the facts that $\Tr\{AB\} = \Tr\{BA\}$ and $|\Tr\{AB\}|\leq |\Tr\{A\}|\rho\{B\}$ where $\rho$ is the spectral radius of $B$.
The above implies that when $(I-T\beta^{-1}\rho(\Sigma_t^{-1}))$ is positive, which is guaranteed for small $T$, $||(\Sigma_t-\beta^{-1}\Sigma)^2||_F$ decays to $0$. The conclusion of the above discussion is summarized in the following theorem.

\begin{theorem}
\label{Thm_Frobinus}
    When $h$ and $T$ are sufficiently small, let  $\Sigma_t$ be the continuous analog of $\Sigma_k$ when $h\rightarrow 0$ and assume that $\Sigma_t$ is symmetric positive definite for all $t$. Then the Frobenius norm of $\Sigma_t-\Sigma$ will decay to $0$ for $T$ small enough, and the decay rate is bounded by
\begin{align}
  \frac{d||(\Sigma_t-\beta^{-1}\Sigma)^2||_F}{d t}  \leq -4\Tr\{(\Sigma_t-\beta^{-1}\Sigma)\Sigma^{-1}  (\Sigma_t-\beta^{-1}\Sigma)\} (I-T\beta^{-1}\rho\{ \Sigma_t^{-1}\})+\mathcal{O}(T^2) \leq 0
\end{align}
which depends on eigenvalues of $\Sigma_t$.
\end{theorem}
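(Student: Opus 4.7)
The plan is to pass from the discrete iteration \eqref{var_gaussian} to a first-order ODE for the continuous analog $\Sigma_t$ and then differentiate the squared Frobenius distance to the target covariance $\Sigma$. All the ingredients have been laid out in the discussion preceding the theorem, so the proposal is essentially to organize that derivation and make the bounds rigorous.

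First I would start from the update
\[
\Sigma_{k+1} = \bigl(I - h\Sigma^{-1} + h\beta^{-1}\Sigma_{k,T}^{-1}\bigr)\Sigma_{k,T}\bigl(I - h\Sigma^{-1} + h\beta^{-1}\Sigma_{k,T}^{-1}\bigr),
\]
expand the product to first order in $h$, and drop the $\mathcal{O}(h^2)$ term. Using the expression for $\Sigma_{k,T}$ from \eqref{var_rhoT} together with the identity $K = (I+T\Sigma^{-1})^{-1} = I - T\Sigma^{-1} + \mathcal{O}(T^2)$, I would expand $\beta^{-1}\Sigma_{k,T}^{-1}$ via the Neumann series as in the paragraph before the theorem, obtaining
\[
\beta^{-1}\Sigma_{k,T}^{-1} = \Sigma_k^{-1}\bigl(I + T(\Sigma^{-1} + \Sigma_k\Sigma^{-1}\Sigma_k^{-1} - 2\Sigma_k^{-1})\bigr) + \mathcal{O}(T^2).
\]
Substituting this back gives the explicit discrete evolution of $\Sigma_k$ modulo $\mathcal{O}(h^2) + \mathcal{O}(T^2)$ terms.

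Next I would take $h\to 0$ to obtain the continuous analog \eqref{cont_sigma_de}:
\[
\frac{d\Sigma_t}{dt} = -\Sigma_t\Sigma^{-1} - \Sigma^{-1}\Sigma_t + 2I + T\bigl(2\Sigma^{-1} + \Sigma_t^{-1}\Sigma^{-1}\Sigma_t + \Sigma_t\Sigma^{-1}\Sigma_t^{-1} - 4\Sigma_t^{-1}\bigr) + \mathcal{O}(T^2).
\]
Then I would compute
\[
\frac{d}{dt}\Tr\{(\Sigma_t - \Sigma)^2\} = 2\Tr\Bigl\{(\Sigma_t - \Sigma)\frac{d\Sigma_t}{dt}\Bigr\},
\]
insert the ODE, and simplify using cyclicity of trace. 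The $\mathcal{O}(1)$ terms collapse to $-4\Tr\{(\Sigma_t - \Sigma)\Sigma^{-1}(\Sigma_t - \Sigma)\}$ because the matched cross terms involving $\Sigma^{-1}\Sigma$ and $\Sigma\Sigma^{-1}$ cancel against the constant $2I$, and the $\mathcal{O}(T)$ terms similarly combine to $4T\,\Tr\{(\Sigma_t - \Sigma)(\Sigma^{-1}\Sigma_t^{-1})(\Sigma_t - \Sigma)\}$.

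Finally, to obtain the stated bound, I would apply the trace inequality $|\Tr\{AB\}| \leq |\Tr\{A\}|\,\rho(B)$ to the $T$-term with $A = (\Sigma_t - \Sigma)\Sigma^{-1}(\Sigma_t - \Sigma)$ and $B = \Sigma_t^{-1}$, yielding
\[
\frac{d\,\Tr\{(\Sigma_t - \Sigma)^2\}}{dt} \leq -4\Tr\{(\Sigma_t - \Sigma)\Sigma^{-1}(\Sigma_t - \Sigma)\}\bigl(1 - T\rho(\Sigma^{-1})\bigr) + \mathcal{O}(T^2),
\]
and the right-hand side is non-positive as soon as $T\rho(\Sigma^{-1}) < 1$, since the inner trace is non-negative ($\Sigma^{-1}$ is positive definite). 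Decay to zero then follows by Gronwall-type reasoning with rate controlled by the smallest eigenvalue of $\Sigma^{-1}$.

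The main obstacle I expect is not the algebra per se but two bookkeeping issues: keeping the non-commuting products $\Sigma_t\Sigma^{-1}$ and $\Sigma^{-1}\Sigma_t$ in the correct order throughout the Neumann expansion, and justifying that the $\mathcal{O}(T^2)$ remainders are uniform in $t$ along the trajectory (which requires an a priori bound ensuring $\Sigma_t$ stays in a neighborhood of $\Sigma$ where $\Sigma_t^{-1}$ is uniformly bounded). The latter can be handled by a bootstrap argument: for small enough initial deviation, the leading-order decay of $\Tr\{(\Sigma_t - \Sigma)^2\}$ dominates the error terms and keeps $\Sigma_t$ in the regime where the expansion is valid.
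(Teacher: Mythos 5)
Your proposal follows the paper's derivation step for step: expand the discrete update \eqref{var_gaussian} to first order in $h$, use a Neumann series in $T$ for $\beta^{-1}\Sigma_{k,T}^{-1}$, pass to the ODE \eqref{cont_sigma_de}, differentiate $\Tr\{(\Sigma_t-\Sigma)^2\}$, and bound the $\mathcal{O}(T)$ term via cyclicity of trace and $|\Tr\{AB\}|\leq |\Tr\{A\}|\,\rho(B)$. The extra remark on uniformity of the $\mathcal{O}(T^2)$ remainder via a bootstrap is a reasonable hygiene point the paper leaves implicit, but the approach is the same.
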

The above theorem shows, from the continuous perspective, that the covariance matrix of $X_k$ will converge to $\beta^{-1}\Sigma$ when $T$, $h$ that are sufficiently small. 

To conclude this section, we provide a brief comparison between our theoretical convergence result and BRWP as well as MALA.
\begin{itemize}
    \item In comparison to BRWP, the proposed sampling algorithm is unbiased, leading to a steady state that exactly matches the target distribution while BRWP introduces a variance shift by a factor of $T^2/(\beta \sigma_m^2)$, as illustrated in Fig., \ref{fig_var_error}.
 
    \item  Contrasting with MCMC-type methods like MALA, which has a theoretical upper bound for mixing time of $\mathcal{O}(d^2)$ \cite{MALA_2019_converge}, our method exhibits a faster convergence with a theoretical bound of $\mathcal{O}(\log(\sqrt{d}))$. This suggests that our approach achieves faster convergence in higher dimensions, a characteristic that will be evident in numerical experiments.
\end{itemize}




 
\subsection{Analysis of TT-BRWP Algorithm for a Simplified Bayesian Inverse Problem}
\label{sec_TT_BRWP_Bayersian}

In this section, we shall focus on the accuracy of the proposed TT-BRWP in an interesting scenario that arises in Bayesian inverse problems and data fitting. The potential function will be
\begin{equation}
\label{V_Bay}
    V(x) = \frac{||A x - \mu||_2^2}{2\lambda^2} + ||\Gamma x||_2^2
\end{equation}
where $A$ and $\Gamma$ are known linear forward operators and linear regularization operators respectively that could be $m\times d$ for $m\neq d$, $\mu$ is the noisy observation, and $\lambda$ is the noise level that has been estimated. This model corresponds to Tikhonov regularization for inverse problems and $L^2$ regularization in data fitting. For $V(x)$, we can rewrite it as
\begin{equation}
    V(x) = \bigg(x-\bigg(\frac{A^{\ts}A}{2\lambda^2}+\Gamma^{\ts}\Gamma\bigg)^{-1}\frac{A^{\ts}}{2\lambda^2}\mu\bigg)^{\ts}\bigg(\frac{A^{\ts}A}{2\lambda^2}+\Gamma^{\ts}\Gamma\bigg)\bigg(x-\bigg(\frac{A^{\ts}A}{2\lambda^2}+\Gamma^{\ts}\Gamma\bigg)^{-1}\frac{A^{\ts}}{2\lambda^2}\mu\bigg) + C
\end{equation}
where $C$ is independent of $x$. Let $\tilde{\mu} = (A^{\ts}A/(2\lambda^2)+\Gamma^{\ts}\Gamma)^{-1}A^{\ts}/(2\lambda^2)\mu$ and $\tilde{\Sigma} = (A^{\ts}A/(2\lambda^2)+\Gamma^{\ts}\Gamma))^{-1}$, then our goal is to draw samples from the distribution $\mathcal{N}(\tilde{\Sigma},\tilde{\mu})$ with the proposed TT-BRWP to estimate $\tilde{\mu}$.

We note that since $A$ and $\Gamma$ are available, $\tilde{\Sigma}$ and $\Sigma_k$ will commute.
Now, as in the previous section, we still assume $\tilde{\mu}=0$ since the general case can be obtained by a change of coordinates. For the evolution of sample mean $\mu_k$, we focus on one entry $m_k = \mu_k(j)$ and $\tilde{m} = \tilde{\mu}(j)$ for some index $j$ which by \eqref{mu_iteration} satisfies
\begin{equation}
\label{simplifed_mu}
     m_{k+1} =  \bigg(1- h\sigma^{-2}\frac{  T(1+\sigma^{-2}T)+(\sigma_k^2-T^2\sigma_k^{-2})}{\sigma_k^2- T^2\sigma_k^{-2} + 2T(1+\sigma^{-2}T)}\bigg)m_k\,,\\
\end{equation}
when $\beta = 1$.

To compute the total variation between two Gaussian distributions with different mean values, we recall the following result which is a simplified version of Theorem 1.8 in \cite{arbas2023polynomial}.
\begin{lemma}
\label{lemma_TV_gau_diff}
    Suppose $\Sigma_1$, $\Sigma_2$ commute and $d_{TV}(\mathcal{N}(\mu_1,\Sigma_1 ),\mathcal{N}(\mu_2,\Sigma_2)) \leq 1/600$, we have
    \[
    d_{\text{TV}}(\mathcal{N}(\mu_1,\Sigma_1 ),\mathcal{N}(\mu_2,\Sigma_2)) \leq \frac{1}{\sqrt{2}}\max\bigg\{||\Sigma_1^{-1}||_F||\Sigma_1-\Sigma_2||_F ,||\Sigma^{-1/2}( \mu_1-\mu_2)||_2 \bigg\}\,.
    \] 
\end{lemma}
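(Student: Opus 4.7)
The natural starting point is Pinsker's inequality, which gives
\[
d_{TV}(\mathcal{N}(\mu_1,\Sigma_1),\mathcal{N}(\mu_2,\Sigma_2))^2 \leq \tfrac{1}{2}\,\mathrm{KL}\big(\mathcal{N}(\mu_1,\Sigma_1)\,\|\,\mathcal{N}(\mu_2,\Sigma_2)\big),
\]
combined with the closed-form expression
\[
2\,\mathrm{KL} = \Tr(\Sigma_2^{-1}\Sigma_1) - d - \log\det(\Sigma_2^{-1}\Sigma_1) + (\mu_1-\mu_2)^{\ts}\Sigma_2^{-1}(\mu_1-\mu_2).
\]
I would call the first three terms $A$ (covariance part) and the last term $B := \|\Sigma_2^{-1/2}(\mu_1-\mu_2)\|_2^2$ (mean part), so that $d_{TV}^2 \leq (A+B)/4$. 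Then $A+B \leq 2\max\{A,B\}$ immediately yields $d_{TV} \leq \tfrac{1}{\sqrt{2}}\max\{\sqrt{A},\sqrt{B}\}$, which already produces the exact prefactor $1/\sqrt{2}$ and the $\max$ structure that appears in the statement, provided $\sqrt{A}$ can be bounded by $\|\Sigma_1^{-1}\|_F\,\|\Sigma_1-\Sigma_2\|_F$.

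The core work, then, is the bound on $A$, and this is where the commutativity hypothesis and the smallness assumption do their job. Because $\Sigma_1$ and $\Sigma_2$ commute they are simultaneously diagonalizable with eigenvalues $s_i$ and $t_i$, so
\[
A = \sum_i\big[s_i/t_i - 1 - \log(s_i/t_i)\big] = \sum_i\big[u_i - \log(1+u_i)\big], \qquad u_i := s_i/t_i - 1.
\]
The elementary inequality $u-\log(1+u) \leq u^2$ holds as soon as $u \geq -1/2$, and in that range I obtain $A \leq \sum_i u_i^2 = \|\Sigma_2^{-1}\Sigma_1 - I\|_F^2 = \|\Sigma_2^{-1}(\Sigma_1-\Sigma_2)\|_F^2$. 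Using commutativity once more together with the fact that $\Sigma_1$ and $\Sigma_2$ are spectrally close (so $\|\Sigma_2^{-1}\|_F$ and $\|\Sigma_1^{-1}\|_F$ differ only by a harmless constant close to $1$), this is bounded by $\|\Sigma_1^{-1}\|_F^2\,\|\Sigma_1-\Sigma_2\|_F^2$, as required.

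The step I expect to be genuinely subtle is justifying that the assumption $d_{TV}\leq 1/600$ is strong enough to keep every $u_i$ inside $[-1/2,\infty)$ so that the Taylor estimate applies uniformly. Pinsker only runs in the direction KL $\Rightarrow$ TV, so a reverse bound specialized to Gaussians is needed; projecting onto the common eigenbasis reduces this to a one-dimensional question of how close $s_i/t_i$ must be to $1$ when the associated univariate TV distance is small, which is a standard but delicate explicit estimate on the error function. The numerical threshold $1/600$ is precisely the slack absorbing all the constants produced in this reverse step. Once that calibration is fixed, assembling Pinsker, the Gaussian KL formula, the Taylor bound, and the $a+b\leq 2\max\{a,b\}$ trick reproduces the stated inequality; this is, in essence, the simplification of Theorem 1.8 of \cite{arbas2023polynomial} that is being invoked here.
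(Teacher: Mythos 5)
The paper offers no proof of this lemma: it is stated and used as a ``simplified version'' of Theorem~1.8 of Arbas, Ashtiani, and Liaw, cited as \cite{arbas2023polynomial}. Your derivation from Pinsker's inequality and the closed-form Gaussian KL divergence is therefore a genuinely different, self-contained route, and its skeleton is sound: Pinsker gives $d_{TV}^2 \leq \tfrac{1}{2}\,\mathrm{KL} = \tfrac{1}{4}(A+B)$; the elementary split $A+B\leq 2\max\{A,B\}$ produces both the $1/\sqrt{2}$ prefactor and the $\max$ structure; $B$ is already the squared mean term; and commutativity plus a Taylor estimate on $u-\log(1+u)$ controls the covariance part $A$. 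What this buys is transparency and a concrete mechanism for where the $1/600$ threshold enters, whereas the paper simply imports a black-box constant from an external reference.

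The constant accounting, however, does not close as written. With the coarse bound $u-\log(1+u)\leq u^2$ you get $A\leq \|\Sigma_2^{-1}(\Sigma_1-\Sigma_2)\|_F^2$, but the lemma demands $\Sigma_1^{-1}$, not $\Sigma_2^{-1}$. Passing from one to the other (say $\|\Sigma_2^{-1}(\Sigma_1-\Sigma_2)\|_F \leq \|\Sigma_2^{-1}\|_{\mathrm{op}}\|\Sigma_1-\Sigma_2\|_F$ with $\|\Sigma_2^{-1}\|_{\mathrm{op}}\leq(1+\delta)\|\Sigma_1^{-1}\|_{\mathrm{op}}\leq(1+\delta)\|\Sigma_1^{-1}\|_F$, where $\delta$ bounds $\max_i|u_i|$) multiplies the right-hand side by $(1+\delta)$, pushing the final constant to $(1+\delta)/\sqrt{2}>1/\sqrt{2}$. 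Calling this a ``harmless constant close to $1$'' glosses over the fact that the lemma asserts an exact constant and the bound $u^2$ carries zero slack to absorb anything. The fix is available but has to be carried out: for $|u|\leq\delta$ one has the sharper estimate $u-\log(1+u)\leq(\tfrac12+O(\delta))u^2$, and the factor $\approx\tfrac12$ does absorb $(1+\delta)^2$ once $\delta$ is small enough; establishing that $d_{TV}\leq 1/600$ confines every $u_i$ to such a small window (e.g.\ via the 1D marginal TV distances, or the formula \eqref{TV_Gau}) is precisely the calibration step you flagged as subtle but left open. So the approach is correct and more elementary than the paper's citation, but as written it stops one quantitative step short of the stated $1/\sqrt{2}$, and that step is not cosmetic.
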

Then we can derive the following result regarding the total variation difference and mixing time. 
\begin{theorem}
\label{thm_Bayersian}
    Let $V(x)$ be defined in \eqref{V_Bay} and $\sigma_m$ be the smallest eigenvalue of $\tilde{\Sigma}$. For distribution $\Pi_k$ evolved following Algorithm \ref{TT_Algo} with $T\leq \sigma_m^2/(2+2\sqrt{2})$, $h\leq \sigma_m^2/4$, assuming the initial distribution satisfies  $d_{TV}(\Pi_0,\Pi^*)\leq 1/600$ and $\sigma^{(j)}_0=\sigma^{(j)}$ for all $j$, we have
\[
||\mu_k-\tilde{\mu}||_{\infty} \leq (1-\delta)^k||\mu_0-\tilde{\mu}||_{\infty}\,,
\]
where $\delta = h/(2\sigma_m^2)$.
Moreover, the total variation is bounded by
\[
d_{\text{TV}}(\Pi_k,\Pi^*) \ \leq \frac{1}{\sqrt{2}}C\sqrt{d}(1-\delta)^k\,,
\]
and the mixing time is given as
\[
t_{mix}(\varepsilon,\Pi_0)  = \mathcal{O}\left(\log (d/\varepsilon)/\log(1-h/(2\sigma_m^2)) \right), 
 \]
where the constant $C$ depends on $\mu$, $\mu_0$, and $T$.
\end{theorem}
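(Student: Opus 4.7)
The plan is to decouple the covariance and mean evolutions, bound each contractively, and then assemble the two bounds into a total variation estimate via Lemma \ref{lemma_TV_gau_diff}. After a translation I may assume $\tilde\mu = 0$, since $V$ in \eqref{V_Bay} is a pure quadratic. Crucially, because $A$ and $\Gamma$ are known operators, $\tilde\Sigma$ is a fixed positive-definite matrix whose eigenspaces are shared with every iterate $\Sigma_k$ produced by Algorithm \ref{TT_Algo}. This places us squarely in the simultaneously diagonalizable regime of Section \ref{sec_TT_BRWP_Gaussian}.

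For the covariance, the hypothesis $|\sigma_0^{(j)} - \sigma^{(j)}| \leq \sigma^{(j)}/2$ together with $T, h \leq \sigma_m^2/4$ places each eigenvalue iteration inside the scope of Theorem \ref{Thm_gaussian}, which then gives
\[
\|\Sigma_k - \tilde\Sigma\|_F \leq (1 - h/\sigma_m^2)^k \|\Sigma_0 - \tilde\Sigma\|_F \leq (1 - \delta)^k \|\Sigma_0 - \tilde\Sigma\|_F
\]
with $\delta = h/(2\sigma_m^2)$. In particular the closeness assumption on the eigenvalues is propagated to every iteration, so I am free to use it when bounding the mean.

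For the mean, I would use the scalar recurrence \eqref{simplifed_mu}, which reduces the task to uniformly controlling the contraction factor $1 - h\psi(\sigma_k^{(j)}, T)$ with $\psi(\zeta, T) := (T + \zeta^2)/(\zeta^4 + 2T\zeta^2 - T^2)$. In a technical lemma modeled on Lemma \ref{Tech_lemma}, I would establish the two-sided bound $|1 - h\psi(\zeta, T)| \leq 1 - \delta$ for every $\zeta \in [\sigma/2,\, 3\sigma/2]$. The upper side is immediate for $h$ small. The lower side rearranges to
\[
T^2 + 2T(\sigma^2 - \zeta^2) + \zeta^2(2\sigma^2 - \zeta^2) \geq 0,
\]
and the specific threshold $T \leq \sigma_m^2/(2 + 2\sqrt{2})$ is tuned precisely so that this inequality holds throughout the admissible $\zeta$-range. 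Iterating componentwise then yields $\|\mu_k - \tilde\mu\|_\infty \leq (1 - \delta)^k \|\mu_0 - \tilde\mu\|_\infty$.

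With both contractions in hand and the initial-distance hypothesis $d_{TV}(\Pi_0, \Pi^*) \leq 1/600$ (which persists because the two contractions drive $\Pi_k$ progressively closer to $\Pi^*$), Lemma \ref{lemma_TV_gau_diff} applies at every $k$ and produces
\[
d_{TV}(\Pi_k, \Pi^*) \leq \frac{1}{\sqrt{2}} \max\bigl\{ \|\tilde\Sigma^{-1}\|_F \|\Sigma_k - \tilde\Sigma\|_F,\ \|\tilde\Sigma^{-1/2}(\mu_k - \tilde\mu)\|_2 \bigr\}.
\]
Using $\|v\|_2 \leq \sqrt{d}\,\|v\|_\infty$ and absorbing the spectral factors of $\tilde\Sigma$ into a constant $C = C(\mu, \mu_0, T, h)$ produces the stated $\frac{1}{\sqrt{2}}C\sqrt{d}(1-\delta)^k$ bound, and the mixing-time estimate follows by solving the resulting exponential inequality for $k$. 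The hard part will be the mean-contraction step: the lower bound on $\psi$ is essentially tight near $\zeta = \sqrt{2}\sigma$ as $T \to 0$, so verifying $|1 - h\psi(\zeta, T)| \leq 1 - \delta$ uniformly is expected to require a careful case analysis (for instance, splitting into $\zeta \leq \sqrt{2}\sigma$ and $\zeta > \sqrt{2}\sigma$) in the spirit of the argument used in Lemma \ref{Tech_lemma}.
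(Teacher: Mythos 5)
Your overall route matches the paper's: (i) import the covariance contraction from Theorem~\ref{Thm_gaussian}, (ii) obtain a componentwise contraction for the mean from \eqref{simplifed_mu} by bounding the factor $1-h\psi(\cdot,T)$, and (iii) combine the two via Lemma~\ref{lemma_TV_gau_diff}. Where you differ is step (ii): you insist on a \emph{uniform} bound $|1-h\psi(\zeta,T)|\le 1-\delta$ over the whole admissible band $\zeta\in[\sigma/2,3\sigma/2]$, which is the honest thing to do since the recursion \eqref{simplifed_mu} sees the running eigenvalue $\sigma_k^{(j)}$, not the limiting one. The paper's own proof quietly substitutes $\sigma_k=\sigma$: its displayed condition
\[
h\le (T+\sigma^2)-\frac{2T^2}{T+\sigma^2}\le \frac{h}{\delta}
\]
is exactly $h\le 1/\psi(\sigma,T)\le h/\delta$, and the subsequent monotonicity check treats $T$ as the only free variable. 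So your plan is not merely equivalent — it is more careful, and in fact exposes a gap.

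The trouble is that your planned lemma cannot be proved as stated. The lower side rearranges, as you write, to
\[
T^2 + 2T(\sigma^2-\zeta^2) + \zeta^2(2\sigma^2-\zeta^2)\;\ge\;0 ,
\]
but this fails on part of your admissible range. At $T=0$ the left side is $\zeta^2(2\sigma^2-\zeta^2)$, which is \emph{negative} for every $\zeta>\sqrt{2}\,\sigma$, and the middle term $2T(\sigma^2-\zeta^2)$ is also negative there; since $\sqrt{2}<3/2$, the band $[\sigma/2,3\sigma/2]$ does contain such $\zeta$, and the hypothesis $T\le\sigma_m^2/(2+2\sqrt{2})$ keeps $T$ far below the positive root $\sigma^2(5+\sqrt{34})/4$ of the quadratic at $\zeta=3\sigma/2$, so no amount of juggling in $T$ rescues it. Concretely, for $\zeta=3\sigma/2$ and $T\to 0$ the factor $1-h\psi$ tends to $1-4h/(9\sigma^2)>1-h/(2\sigma^2)=1-\delta$. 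You have in fact anticipated this (``essentially tight near $\zeta=\sqrt{2}\sigma$''), but the case split you suggest will not close it: for $\zeta\in(\sqrt{2}\sigma,3\sigma/2]$ the inequality is simply false. To make the argument go through one must either restrict the variance band so that $\sigma_k\le\sqrt{2}\,\sigma$ (i.e.\ replace $|\sigma_0^{(j)}-\sigma^{(j)}|\le\sigma^{(j)}/2$ by $|\sigma_0^{(j)}-\sigma^{(j)}|\le(\sqrt{2}-1)\sigma^{(j)}$), take a smaller $\delta$ (roughly $h/((3\sigma/2)^2)$ at $T=0$), or fall back on the paper's substitution $\sigma_k\equiv\sigma$ — which is not justified by what precedes it. Separately, note that Theorem~\ref{Thm_gaussian} gives the covariance rate $\delta'=h/\sigma^2$, not $h/(2\sigma^2)$ as you wrote; that is harmless since $h/(2\sigma^2)<h/\sigma^2$ only makes the stated bound weaker, but the constant should not be attributed to Theorem~\ref{Thm_gaussian}.
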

\begin{proof} To show $||\mu_k-\tilde{\mu}||_{\infty}\leq(1-\delta)^k ||\mu_0-\tilde{\mu}||_{\infty}$, by the iterative relation in \eqref{simplifed_mu}, since we have $\sigma_k = \sigma$, it can be simplified as
\[
m_{k+1} = \bigg(1-h\frac{T+\sigma^2}{\sigma^4+2T\sigma^2-T^2}\bigg)m_k\,.
\]

We observe it suffices to verify
\begin{equation}
    h\leq (T+\sigma^2)- \frac{2T^2}{(T+\sigma^2)} \leq \frac{h}{\delta}\,,
\end{equation}
where we note the condition on $T$ ensures that $(\sigma^4+2T\sigma^2-T^2)>0$.
Then we may take $f(t) :=  (t+\sigma^2)- {2t^2}/{(t+\sigma^2)}$ and it suffices to show $f(0)$, $f(\sigma^2/4)$ satisfy the above inequalities and $f(t)$ is monotonic.

Next, all the above conditions will be equivalent to
\[
    h \leq \sigma^2 \leq h/\delta\,, \quad
    h \leq 23/20\sigma^2 \leq h/\delta\,,\quad 
    \sigma^4 - 2T\sigma^2-T^2 \geq 0\,;
\]
which are satisfied by our choice of parameters. 

Finally, the statement on total variation norm and mixing time follows from Lemma \ref{lemma_TV_gau_diff} and their definitions directly. 
\end{proof}

In conclusion, Theorem \ref{thm_Bayersian} establishes the linear convergence of the proposed method for the potential function \eqref{V_Bay}, which is relevant in the context of Bayesian inverse problems and data fitting applications.

Moreover, from \eqref{simplifed_mu}, we can also derive the continuous analogue $m_t$ for any component of $\mu$ which is 
\[
|m_t-\tilde{m}| = \exp\bigg(-\frac{T+\sigma^2}{(T+\sigma^2)^2-2T^2}t\bigg)|m_0-\tilde{m}|\,,  
\]
and if $(T+\sigma^2)^2-2T^2>0$, i.e., $T\leq \sigma^2/(\sqrt{2}-1)$, $m_t$ converges monotonically to  $\tilde{m}$. This provides a clear continuous analog of results in Theorem \ref{thm_Bayersian}

\subsection{Numerical Consideration and Computational Complexity}
\label{sec_TT_numerics}
In this subsection, we shall briefly discuss some numerical details in the implementation of TT-BRWP in Algorithm \ref{TT_Algo} and the efficiency of computation which provides important practical guidance on its application.

For the computation of $\hat{\eta}_T$ and $\nabla\hat{\eta}_T$ in line 7, we utilize a similar kernel formulation introduced in \eqref{exp_int} which provides a solution for the forward heat equation in \eqref{forward_heat}. Hence, we have
\begin{equation}
    \hat{\eta}_T(x) = \bigg(\frac{\beta}{2\pi T}\bigg)^{d/2}\int_{\mathbb{R}^d} \exp\bigg(-\frac{\beta||x-y||_2^2}{4T}\bigg) \hat{\eta}_0(y)dy
\end{equation}
and its gradient will be
\begin{equation}
   \nabla \hat{\eta}_T(x) = - \bigg(\frac{\beta}{2\pi T}\bigg)^{d/2}\int_{\mathbb{R}^d}\frac{\beta(x-y)}{2T} \exp\bigg(-\frac{\beta||x-y||_2^2}{4T}\bigg) \hat{\eta}_0(y)dy\,.
\end{equation}
We observe that all the above computations as well as $\hat{\eta}_T\circ{\eta}_T$ can be implemented in a tensor-train format efficiently by using $K_d$ defined in \eqref{def_Kd}.

Moreover, in algorithm \ref{TT_Algo}, when $x$ is near the boundary of the discretization grid, the value of $\eta_0$ 
\begin{equation}
    \eta_0(x)= \int_{\mathbb{R}^d}\exp\bigg(-\frac{\beta}{2}\bigg({V(x)}+\frac{||x-y||^2}{2T}\bigg)\bigg) dy \,,
\end{equation}
will be extremely small which creates underflow and induces difficulties in approximating $1/\eta_0$. In this scenario, one may compute $-\log(\eta_0)$ on line 2 and compute the exponential of a tensor by Taylor's polynomial of exponential function which enhances the numerical stability with the price of increased computational complexity. 

Finally, for the interpolation on line 10, linear interpolation on a gridded mesh is employed which has proven to be able to provide satisfactory numerical accuracy and efficiency.    
  
\section{Numerical Experiments}
\label{sec_NE}

In this section, we test the proposed TT-BRWP in algorithm \ref{TT_Algo} and compare it with the classical Unadjusted Langevin Algorithm (ULA), the unbiased Metropolis-adjusted Langevin algorithm (MALA), and the BRWP with MC integration in \cite{BRWP_2023}. For all our following experiments, \blue{the initialization $X_0$ is always sampled from a Gaussian distribution with mean $0$ and variance $1$ except for example 3. Moreover, for all the tensor train approximation, we run the crossing algorithm \cite{tt_cross} on the mesh $[-L, L]^d$ for $L = 6$ and $32L$ points in each dimension.}

    
\subsection{Gaussian Distribution}

\noindent\textbf{Example 1:} In this example, we explore the case \(V(x) = -(x-a)^{\ts}\Sigma^{-1}(x-a)/2\) in \(\mathbb{R}^6\) where \(\Sigma = \sigma^2 I_6\) for different choices of $\sigma$ to validate our theoretical results on the convergence of variance presented in Theorem \ref{Thm_gaussian}. 

\begin{figure}[H]
    \centering
        \begin{tabular}{cc}
    \includegraphics[scale=0.49]{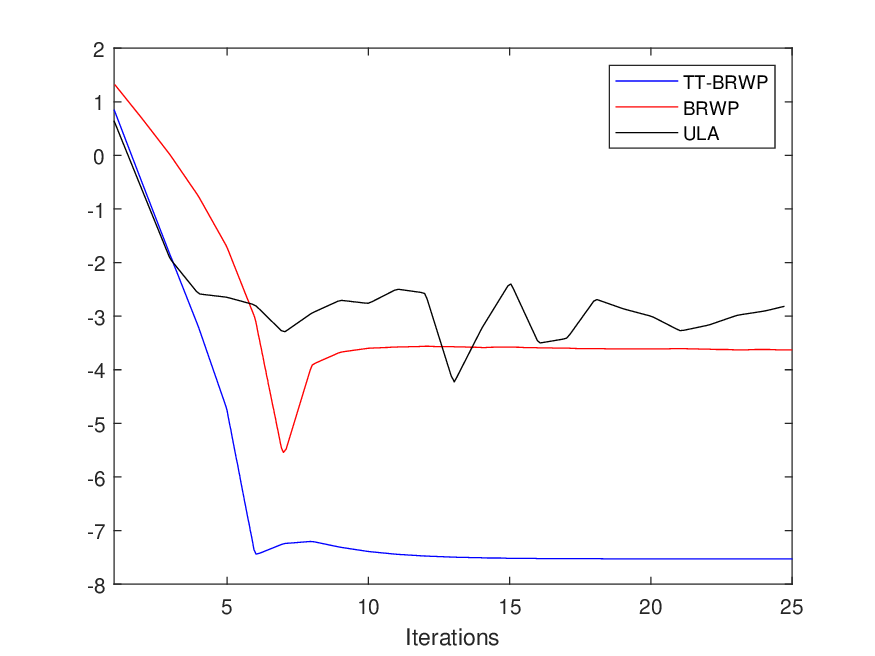}&
    \includegraphics[scale=0.49]{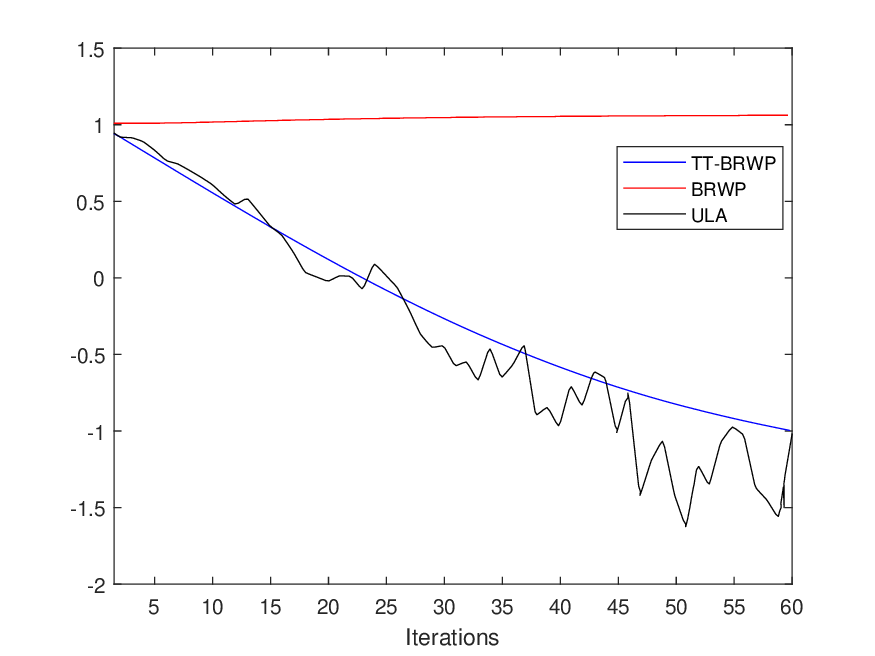}\\
 True variance $\sigma = 0.2$ & True variance $\sigma = 4$
    \end{tabular}
    \caption{Example 1. The logarithm of the $L^2$ error for the variance with samples generated from TT-BRWP (blue), BRWP (red), and ULA (black).}
    \label{fig:Gaussian_variance}
\end{figure}

From Fig.\,\ref{fig:Gaussian_variance}, we observe clear linear convergence of the variance for Gaussian distributions with both \(\sigma = 0.2\) and \(\sigma = 4\). From left, TT-BRWP exhibits faster convergence for small \(\sigma\) and improves the final estimation significantly due to the unbiased nature; from right, TT-BRWP is more robust compared to BRWP for large \(\sigma\) as BRWP degenerates in this case.

\noindent\textbf{Example 2:} Next, we consider the case when \(\Sigma\) is a  general SPD matrix. We pick
\[
\hat{\Sigma} = \begin{bmatrix}
    0.4 & 0.2 & 0.3  \\
    0.2 & 3 & 0.2  \\
    0.3 & 0.2 & 6  \\
\end{bmatrix}
\]
and let \(\Sigma = \begin{bmatrix}
    \hat{\Sigma} & \bf{0}\\
    \bf{0} & I_3
\end{bmatrix}\).
\blue{The rank of resulting TT approximation to $\exp(-\beta V/2)$ is 5.} Then we examine the evolution of the mean and the total variation for the distribution generated by TT-BRWP, BRWP, and ULA, respectively.

 \begin{figure}
    \centering
        \begin{tabular}{cc}
    \includegraphics[scale=0.49]{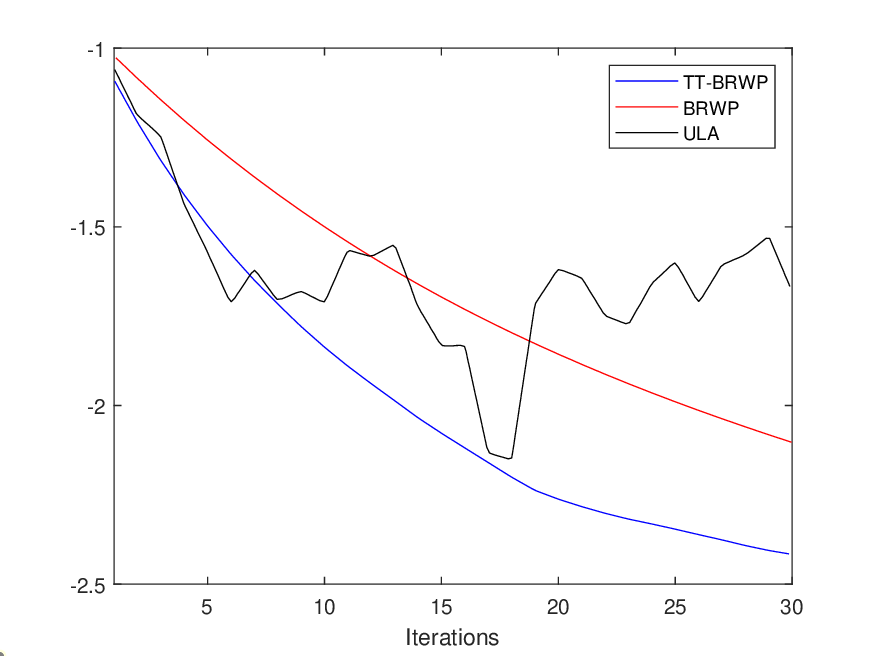}&
    \includegraphics[scale=0.49]{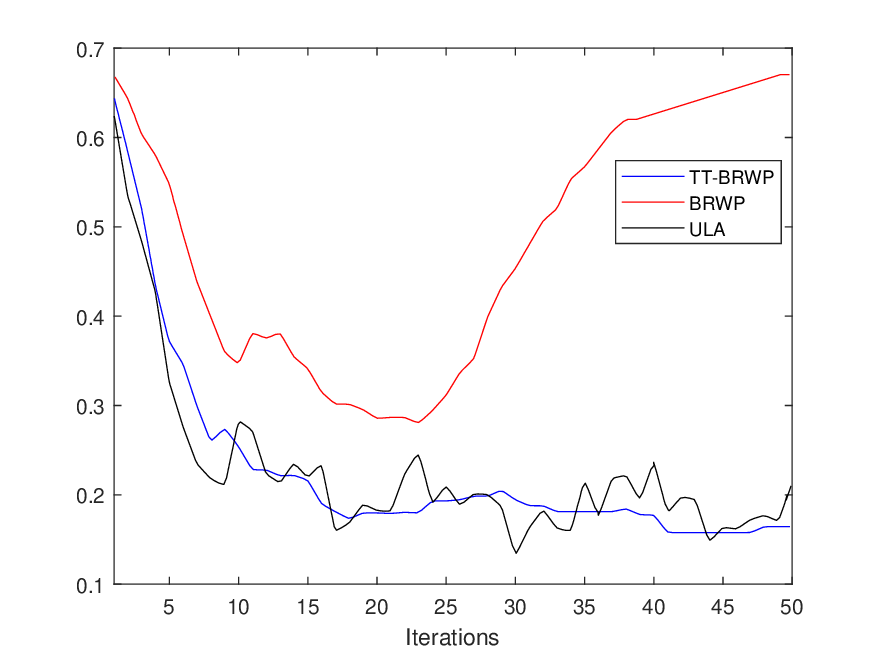}\\
    Log of $L^2$ error for mean value  & Total variation error
    \end{tabular}
    \caption{Example 2. The error of mean for samples generated for TT-BRWP (blue), BRWP (red), and ULA (black) versus iterations.}
    \label{fig:Gaussian_mean}
\end{figure}

From the first plot of Fig.\,\ref{fig:Gaussian_mean}, TT-BRWP provides the best approximation of the sample mean among the three algorithms. Moreover, from the second plot, BRWP with MC integration degenerates for this case which shows the necessity of the TT approximation. 

\subsection{Gaussian Mixture and Bimodal Distribution}
\blue{\noindent\textbf{Example 3:} In the third example, we consider sampling from a Gaussian mixture distribution in \(\mathbb{R}^4\) defined by
\[
\rho^*(x) = \frac{1}{Z} \left( \frac{\exp\left(-||x - a||_2^2 / 2\right) + \exp\left(-||x + a||_2^2 / 2\right)}{2} \right),
\]
where \(Z\) is the normalization constant. We select \(a = (2, 0, 0, 0)^{\top}\) in \(\mathbb{R}^4\), and choose parameters \(h = 0.1\) and \(T = 0.1\) to ensure the convergence of the sampling algorithms. The rank of the resulting TT approximation to \(\exp(-\beta V / 2)\) is 3. Initial distributions are drawn from a normal distribution with variance 2 and mean vectors \(0\) and \(a\) for the experiments shown in the first and second rows of Fig. \ref{fig:Gaussian_mixture}.

\begin{figure} 
    \centering
    \begin{tabular}{ccc}
        \includegraphics[scale=0.32]{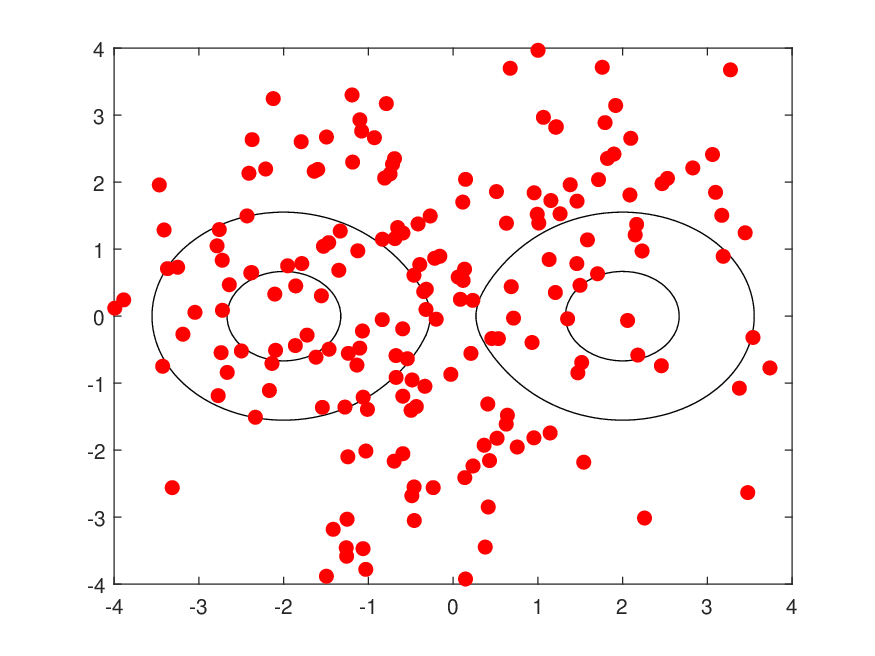} &
        \includegraphics[scale=0.32]{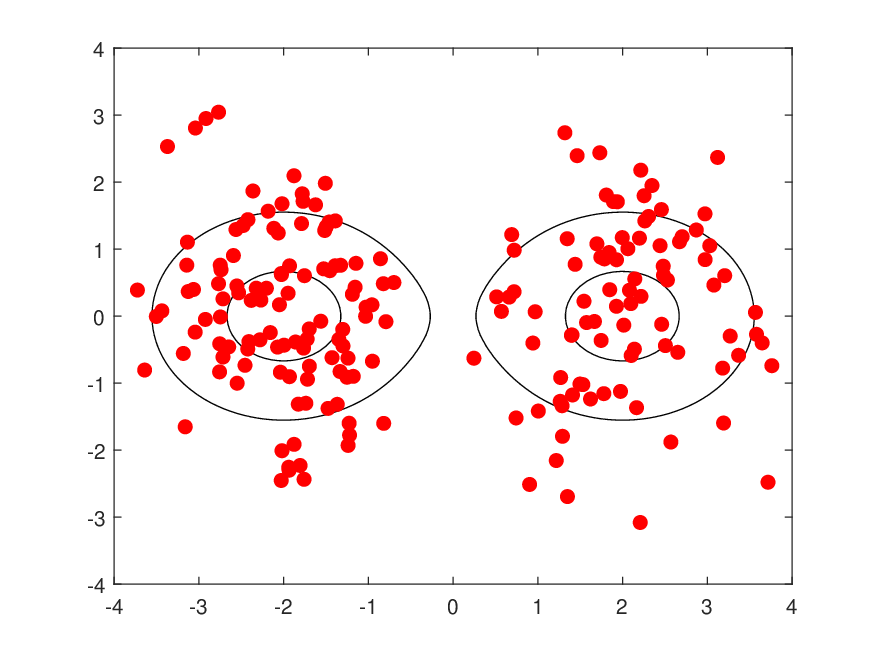} &
        \includegraphics[scale=0.32]{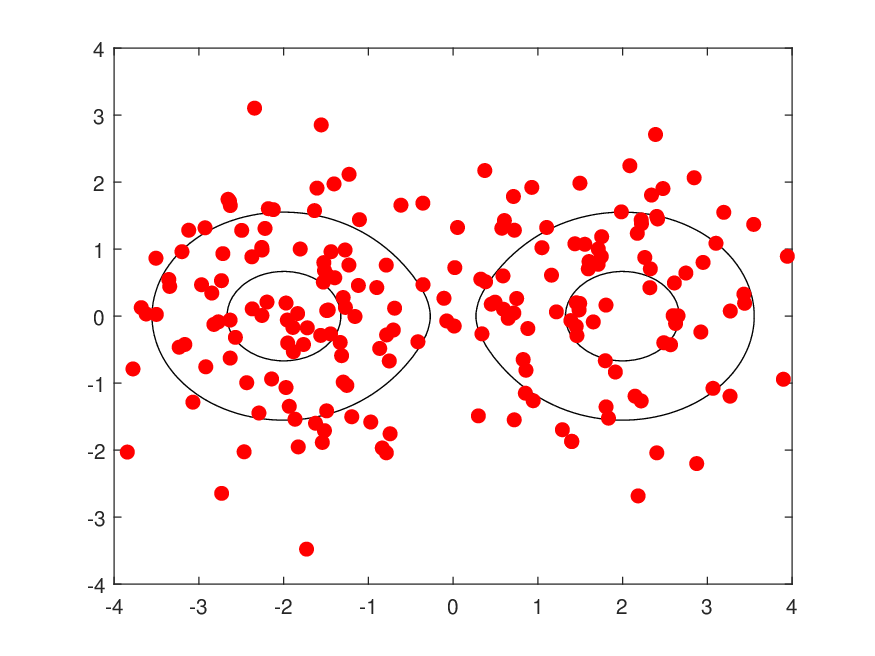}\\
      Initial Particles & TT-BRWP & ULA \\
    \end{tabular}\\
    \begin{tabular}{ccc}
        \includegraphics[scale=0.32]{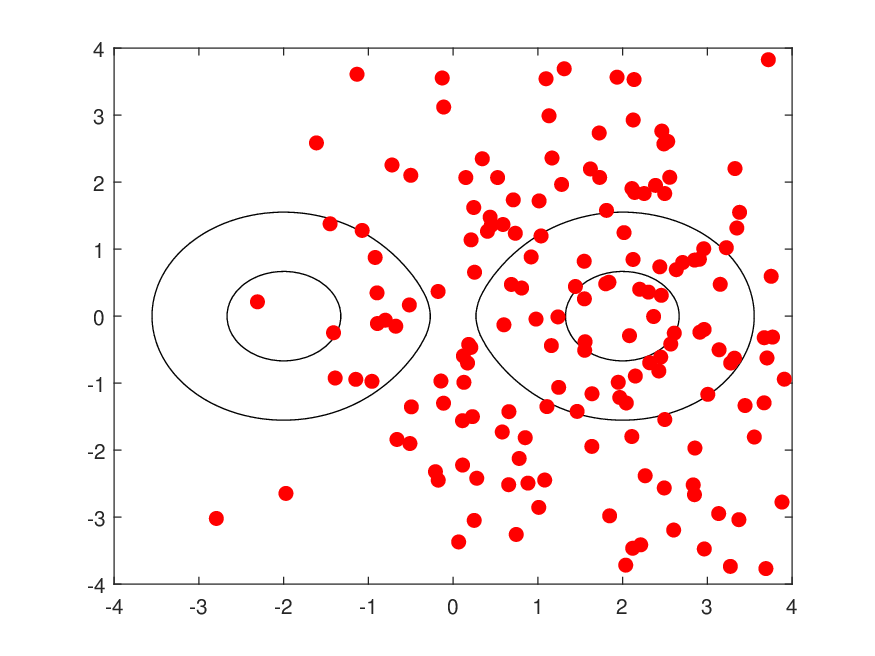} &
        \includegraphics[scale=0.32]{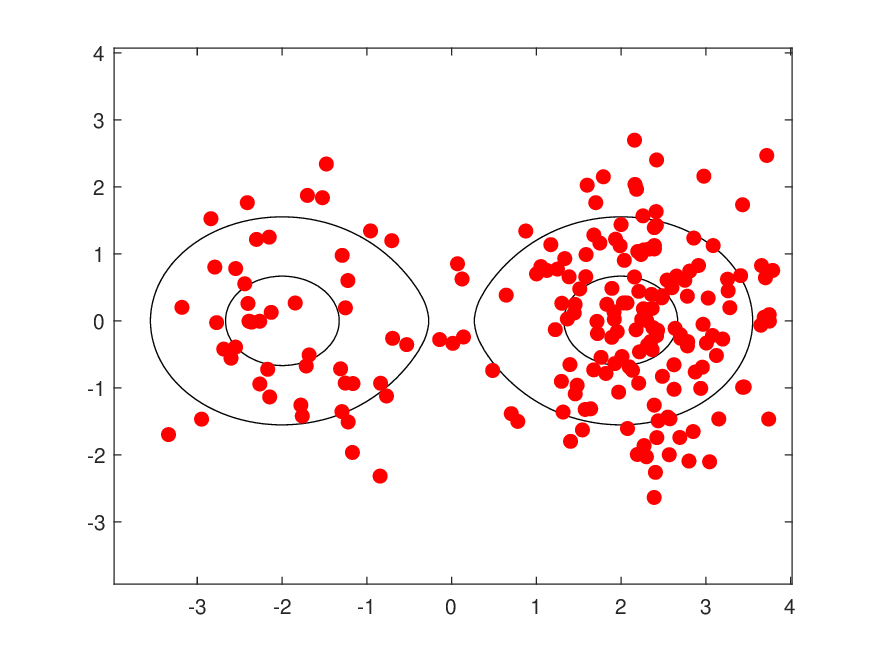} &
        \includegraphics[scale=0.32]{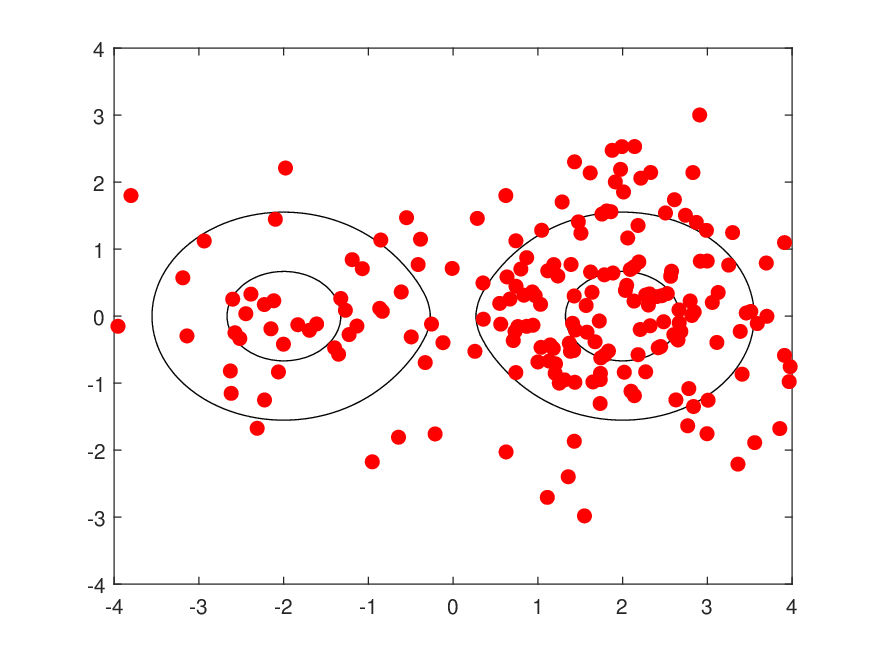}\\
      Initial Particles & TT-BRWP & ULA \\
    \end{tabular}
    \caption{\blue{Example 3. Evolution of particles for different algorithms after 10 iterations (first row) and 15 iterations (second row) under different initial distributions. The contour lines are \(0.8\) and \(0.3\) of density function.}}
    \label{fig:Gaussian_mixture}
\end{figure}

From the first row of Fig. \ref{fig:Gaussian_mixture}, we observe that under a reasonable initial distribution that covers the two modes, TT-BRWP (middle plot) provides a more structured distribution of samples compared to ULA, which has many points falling outside the high-probability region of the original distribution. In the second row of Fig. \ref{fig:Gaussian_mixture}, where the initial distribution is centered around one of the modes, TT-BRWP (middle plot) also demonstrates faster convergence compared to ULA. This indicates that the proposed approach, as an interacting particle system, converges quickly, even under poor initial distribution.
}


\noindent\textbf{Example 4:} In this example, we consider sampling from a bimodal distribution (double moon) in \(\mathbb{R}^6\) with
\[
\rho^*(x) = \frac{1}{Z}\exp(-2(||x||-a)^2)\left[\exp\left(-2(x_1-a)^2\right)+\exp\left(-2(x_1+a)^2\right)\right]\,.
\]
\blue{The rank of resulting TT approximation to $\exp(-\beta V/2)$ is $15$.}
We test the case \(a = 2\) with \(T = 0.01\) and \(h = 0.01\).

\begin{figure}[H]
    \centering
    \begin{tabular}{ccc}
        \includegraphics[scale=0.32]{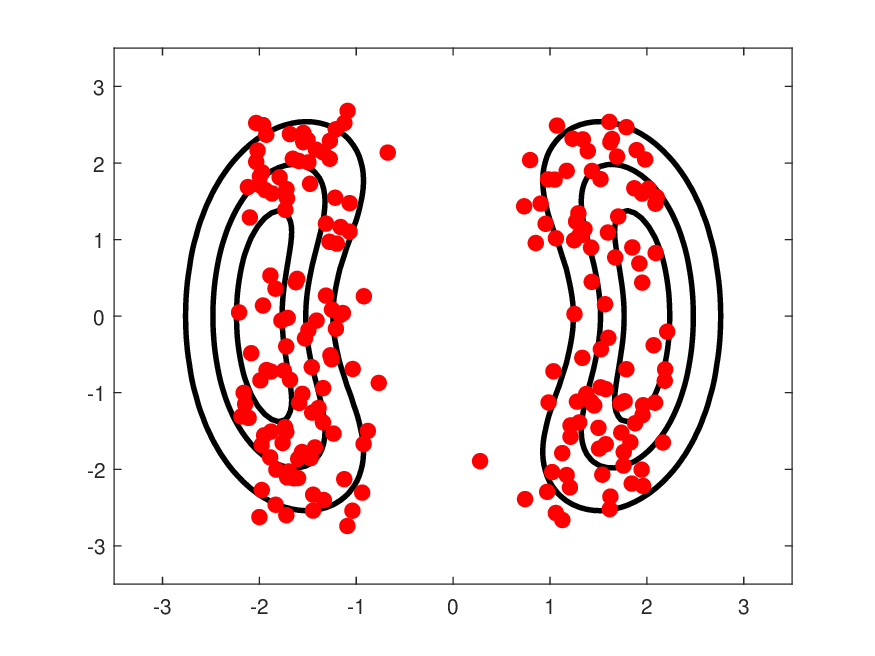} &
        \includegraphics[scale=0.32]{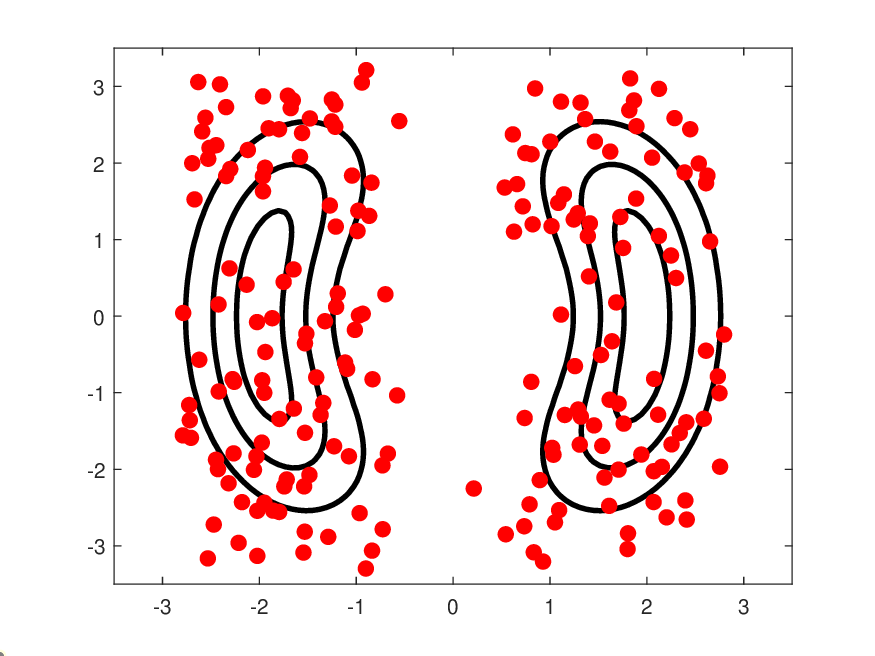} &
        \includegraphics[scale=0.32]{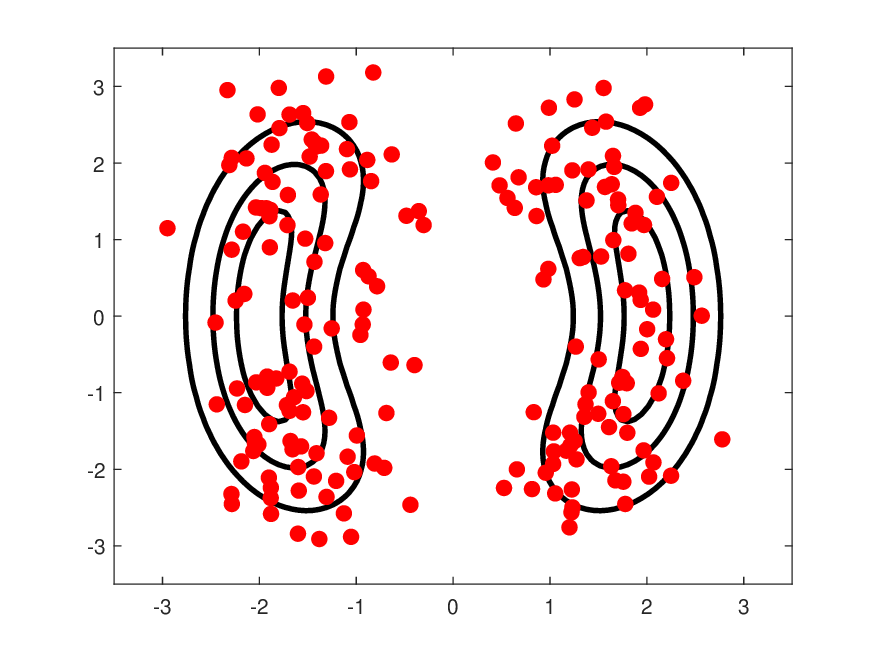} \\
          TT-BRWP & BRWP & ULA 
    \end{tabular}
    \caption{Example 4. Evolution of sample points for different algorithms after 30 iterations where the contour lines are \(0.8\), \(0.4\), and \(0.1\) in \(20\) iterations.}
    \label{fig:Bimodal_1}
\end{figure}

From Fig.\,\ref{fig:Bimodal_1}, it is evident that TT-BRWP provides a set of particles that are concentrated more in the high probability region of the target distribution within only 20 iterations, while BRWP and ULA converge more slowly. 

\subsection{Nonconvex Potential Function}

\noindent\textbf{Example 5:} In this example, we present a particularly interesting case where the potential function \(V(x)\) is nonconvex, which is known to be highly challenging due to the slow convergence of MC integration as shown in table \ref{table_TT_time}. We consider
\[
V(x) = ||x-a||_{1/2}^2 = \big(\sum_{j=1}^3 |x_j-a(j)|^{1/2}\big)^{2},
\]
with \(a = (1,1,0)^{\ts}\) in \(\mathbb{R}^3\). \blue{The rank of resulting TT approximation to $\exp(-\beta V/2)$ is 6.}

\begin{figure}[H]
    \centering
    \begin{tabular}{ccc}
        \includegraphics[scale=0.32]{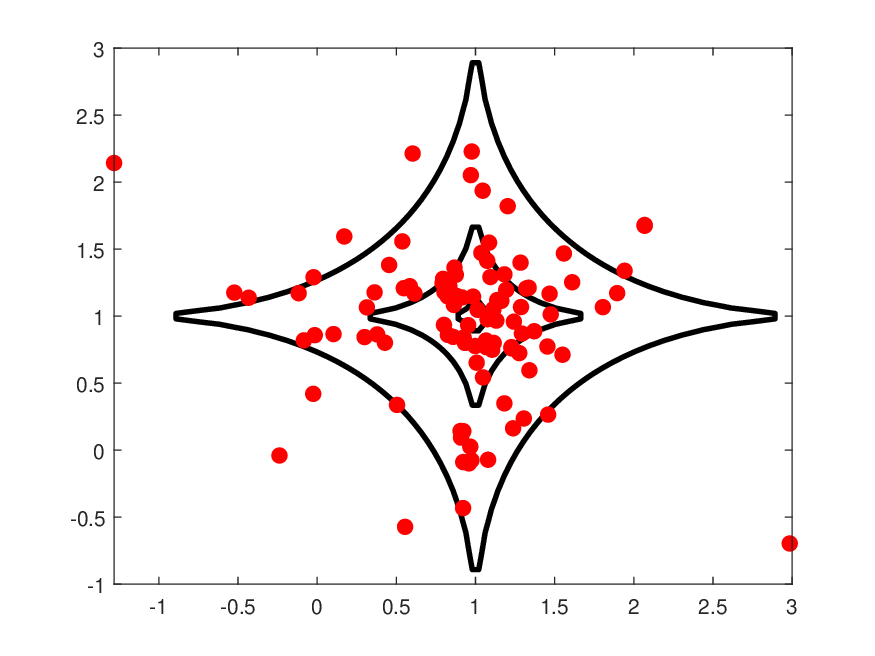} &
        \includegraphics[scale=0.32]{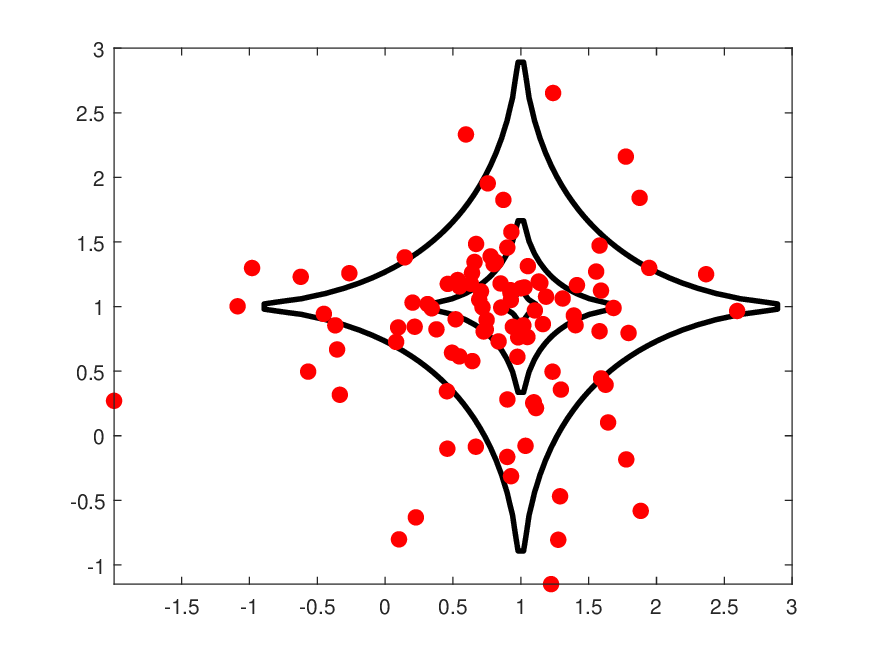} &
        \includegraphics[scale=0.32]{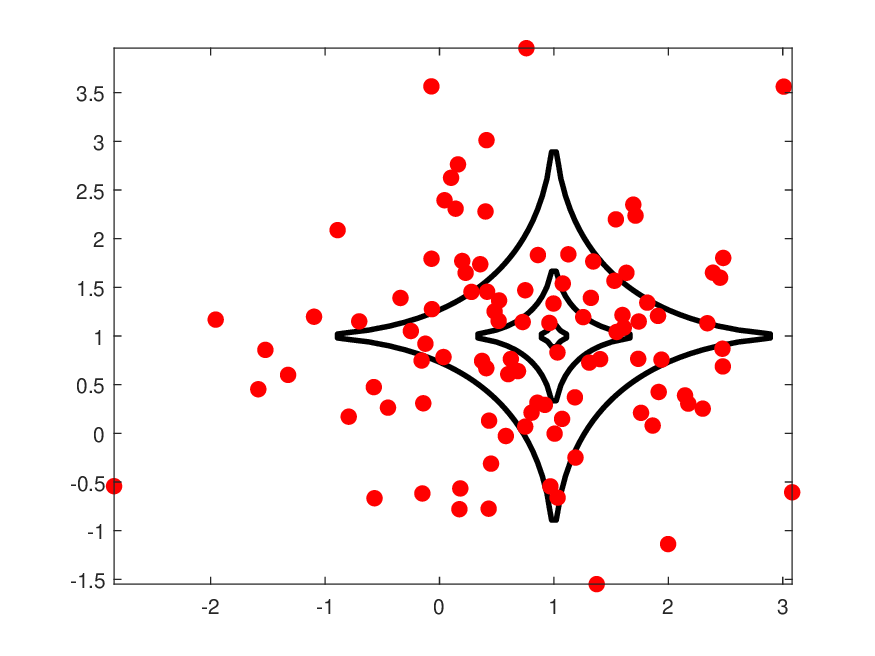} \\
        TT-BRWP& BRWP & ULA
    \end{tabular}
    \caption{Example 5. Evolution of sample points for different algorithms where the contour lines are \(0.8\), \(0.4\), and \(0.1\) after \(20\) iterations.}
    \label{fig:Nonconvex}
\end{figure}

From Fig.\,\ref{fig:Nonconvex}, we can observe that the proposed TT-BRWP algorithm provides a quite accurate set of samples that distributed follows the desired non-convex density function, while samples from BRWP with MC integration and ULA  does not exhibit clear similarity with the target density function.

\subsection{Bayesian Inverse Problems}

In this subsection, we will examine the accuracy and robustness of the proposed TT-BRWP to tackle several interesting and ill-posed inverse problems using Bayesian inference. 

Let us first recall the general setting for Bayesian inverse problems. Firstly, we write the measurement process as
\[
y = \mathcal{G}(x^*) + \zeta,
\]
where \(\zeta\) is random noise, \(x^*\) is the underlying truth we would like to recover, and \(\mathcal{G}\) is the forward operator. The objective of Bayesian inverse problems is to estimate the posterior distribution
\begin{equation}
    \pi(x|y) = \pi(y|x) \pi(x)\,,
\end{equation}
where \(\pi(x)\) is the prior density, and \(\pi(y|x)\) is the likelihood function depending on the forward operator and noise level. Existing sampling algorithms will often suffer from slow convergence or even divergence, especially for high-dimensional or nonlinear scenarios.  

\noindent\textbf{Example 7:}
The first example we consider is a classical ill-posed inverse problem for recovering the initial distribution of the heat equation. Let \(u\) be the solution of the heat equation in the sense that
\begin{align*}
\frac{\partial }{\partial t}u(t,x) = \frac{\partial^2}{\partial x^2} u(t,x) \,, \quad &u(t,0) = u(t,\pi) = 0\,, \quad u(0,x) = u_0(x)\,,\\
&u(T,x) = u_T(x)\,, \quad x\in [0,\pi]\,.
\end{align*}
Then our goal is to recover \(u_0\) from noisy measurement data \(u_T\). To simplify the computation, we assume \(u_0\) is composed of a series of trigonometric functions, and our task is to recover \(\theta_k\) in \(u_0 = \sum_{k=1}^d \theta_k \sin(kx)\) with $d = 10$.

In our experiment, let \(\theta\) represent vectors containing coefficients \(\theta_k \), \(y\) be the measurement data polluted by noise with a noise level \(\sigma^2 = 0.1\), the likelihood function is chosen as
\begin{equation}
    \pi(\theta|y) = \exp\bigg(-\frac{||u(T,x,\theta)-y||_2^2}{2\sigma^2}\bigg)\,.
\end{equation}
The prior distribution is chosen as the \(L_1\) norm of $\theta$ as a sparse constraint, and hence, the potential function \(V(\theta)\) will be
\[
V(\theta) = \frac{||u(T,x,\theta)-y||_2^2}{2\sigma^2} + \tau ||\theta||_{L^1}\,,
\]
where \(\tau\) is a small regularization parameter. Then our reconstruction results are presented in Fig.\,\ref{fig:Inv_heat}.

\begin{figure}[H]
    \centering
        \begin{tabular}{cc}
\includegraphics[trim = {0cm 0.0cm 0cm 0.0cm},clip,scale = 0.475]{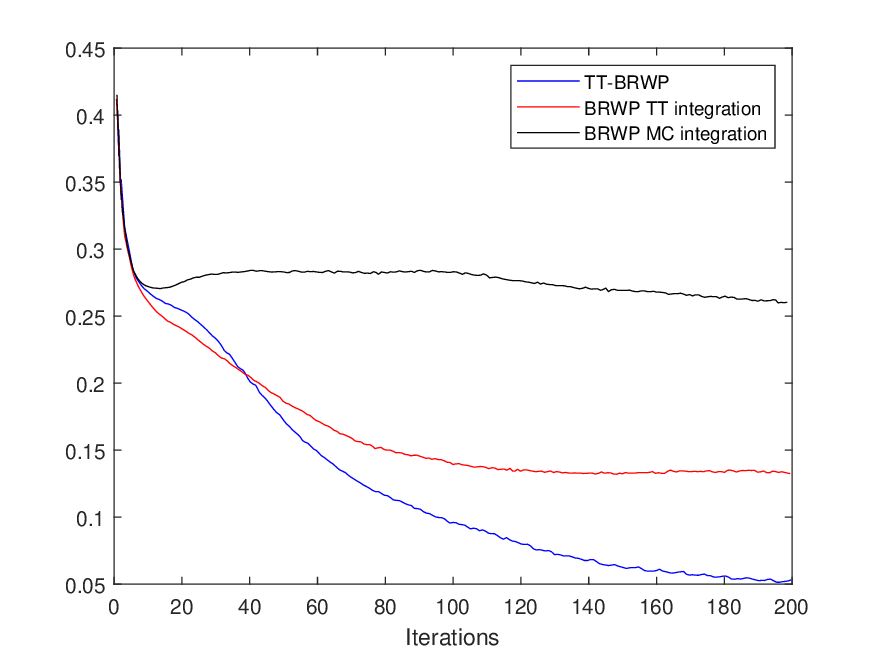}&
\includegraphics[trim = {0cm 0.0cm 0cm 0.0cm},clip,scale = 0.475]{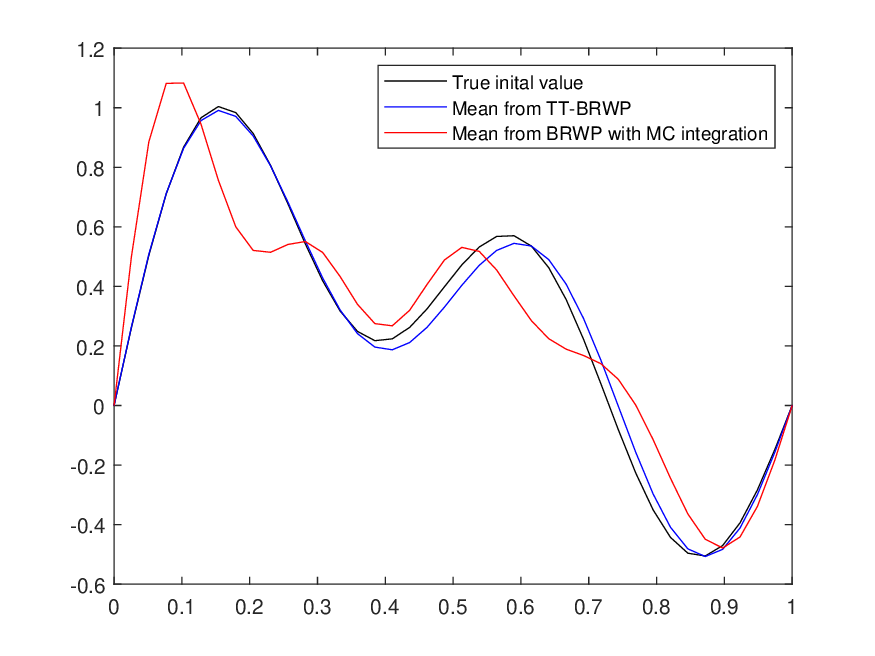}\\
\(L^2\) error between sample mean and true value & Final estimated initial data
\end{tabular}
\caption{Example 7: Recovering initial data for the heat equation under \(L^1\) regularization for \(T = 0.1\) and $h = 0.01$.}
\label{fig:Inv_heat}
\end{figure}

From Fig.\,\ref{fig:Inv_heat}, we can observe that TT-BRWP in blue provides a much better reconstruction compared to sampling algorithms with MC integration in red line and also ULA which does not converge for this task. Moreover, even if we compare BRWP with TT integration (in black) and MC integration (in red) in the first plot, TT integration can indeed improve the accuracy significantly, while the final estimation is still clearly biased due to the employment of empirical distribution for the estimation of the density function.

\noindent\textbf{Example 8:} We consider a nonlinear inverse problem for an elliptic boundary value problem from \cite{Stuart_Bayersian}. The potential $p$ satisfies
\begin{equation}
    -\frac{d}{dy}\bigg(\exp(x_1)\frac{d}{dy}p(y)\bigg) = 1\,,\quad y \in[0,1]\,,
\end{equation}
with $p(0) = 0$ and $p(1) = x_2$. Then the solution to the forward problem has an explicit solution
\begin{equation}
    p(y) = x_2 y + \exp(-x_1)\bigg(-\frac{y^2}{2}+\frac{y}{2}\bigg)\,.
\end{equation}
Due to the exponential term, it is clear that this inverse problem is nonlinear. Then for measurement points $y_1,y_2 \in [0,1]$, the forward operator will be
\begin{equation}
    \mathcal{G}(x) = (p(y_1),p(y_2))^{\ts} \,.
\end{equation}
By employing an $L^2$ regularization term, the potential function will simply be
\begin{equation}
    V(x) = \frac{||\mathcal{G}(x)-\tilde{p}||_2^2}{2\sigma^2} + \tau||x||_2^2\,,
\end{equation}
for noisy measurement data $\tilde{p}$, and noise level $\sigma = 0.1$  which corresponds to $10\%$ noise in the measurement data. In our experiments, we choose $x^*= [0.4,1]$, $y_1 = 0.25$, and $y_2 = 0.75$. 
The reconstruction results for reconstruction error with different choices of step size are presented in Fig.\,\ref{fig:Elliptic}.

\begin{figure}[H]
    \centering
        \begin{tabular}{cc}
\includegraphics[trim = {0cm 0.0cm 0cm 0.0cm},clip,scale = 0.48]{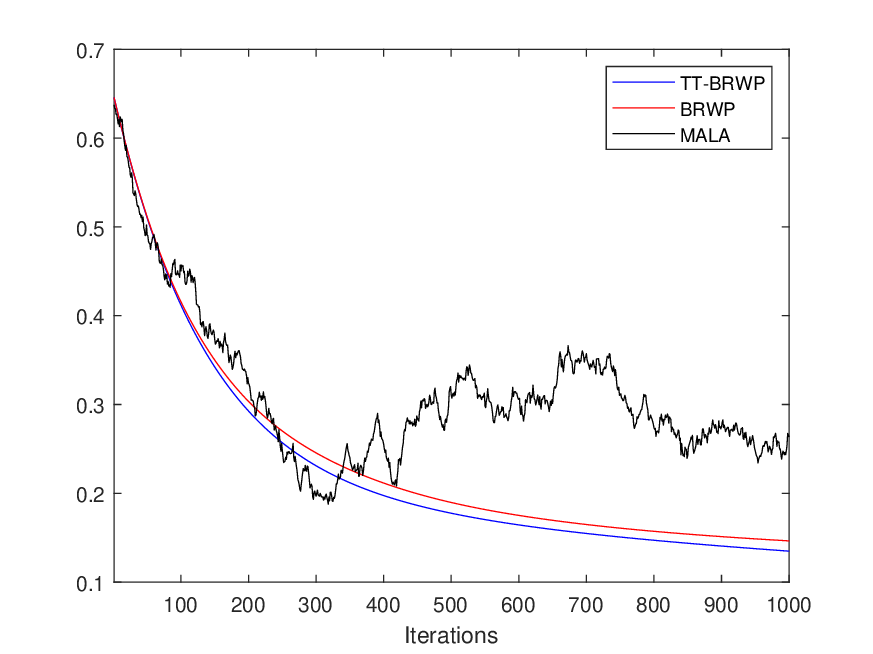}&
\includegraphics[trim = {0cm 0.0cm 0cm 0.0cm},clip,scale = 0.48]{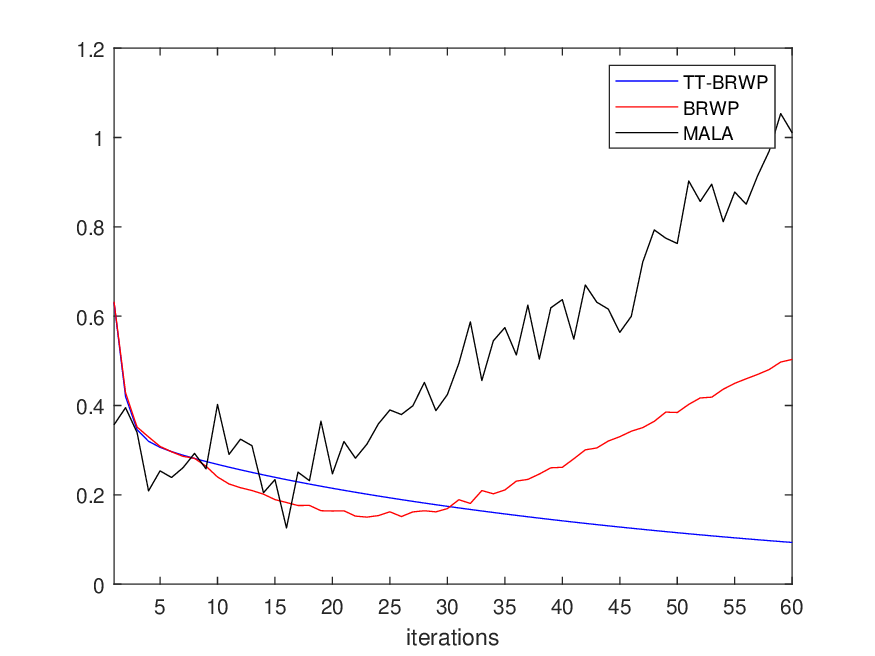}\\
Step size  $h = 10^{-3}$ & Step size $h = 0.1$
\end{tabular}
\caption{Example 8: $L^2$ error of the estimation of parameters $x^*$ versus iteration for an inverse elliptic boundary value problem with different step sizes.}
    \label{fig:Elliptic}
\end{figure}

From Fig.\,\ref{fig:Elliptic}, we observe that when the step size is relatively small, all three methods under consideration exhibit convergence to the stationary distribution, while TT-BRWP in blue provides the most accurate result after a large number of iterations. Moreover, when the step size is large, i.e., $0.1$, in the second plot, TT-BRWP is the only one that provides convergence in a few iterations which demonstrates its strong robustness.

\noindent\textbf{Example 9.} In this example, we examine a nonlinear non-convex inverse problem from \cite{ne_Bayersian_GPT} where we consider the following Cauchy problem
\begin{equation}
        \frac{\partial^2 u}{\partial t^2}(t,x,\theta)- \frac{\partial^2 u}{\partial x^2}(t,x,\theta) = 0\,, \quad
         u(0,x,\theta) = h(x,\theta)\,, \quad u_t(0,x,\theta) = 0\,,
 \end{equation}
 where $h(x,\theta)$ is a unknown source function parameterized by $\theta\in \mathbb{R}^d$ such that
 \begin{equation}
     h(x,\theta) = \sum_{j=1}^d \frac{\sin(k(x-\theta_j))}{k|x-\theta_j|}\,,
 \end{equation}
 where the sinc function can be considered as an approximation to a point source which also makes the problem itself non-convex. Moreover, by d'Alembert's formula, the solution to the Cauchy problem will be
 \begin{equation}
     u(t,x,\theta) = \frac{h(x-t,\theta)+h(x+t,\theta)}{2}\,.
 \end{equation}
The measurement data is considered as $\tilde{u}(x_i,t_i,\theta^*)$ for $x_i$ and $t_j$ distributed uniformly on $[-2,2]$ and $[0,1]$ for $i=1,\cdots,N_i$ and $j = 1,\cdots,N_j$. 

We again employ a $L^2$ regularization term and the potential function we are interested in will be
\begin{equation}
    V(\theta) = \frac{\sum_{i=1}^{N_i}\sum_{j=1}^{N_j}|u(t_j, x_i, \theta)- \tilde{u}(t_j, x_i,\theta^*)|^2}{2\sigma^2}  + \tau ||\theta||_2^2\,,
\end{equation}
with $\sigma = 1$ and $N_i = N_j = 21$. Let $\theta^* = [-0.9,-0.3,0.4,1]$. The result is presented in Fig.\,\ref{fig:wave_source}.

\begin{figure}[H]
    \centering
        \begin{tabular}{cc}
\includegraphics[trim = {0cm 0.0cm 0cm 0.0cm},clip,scale = 0.48]{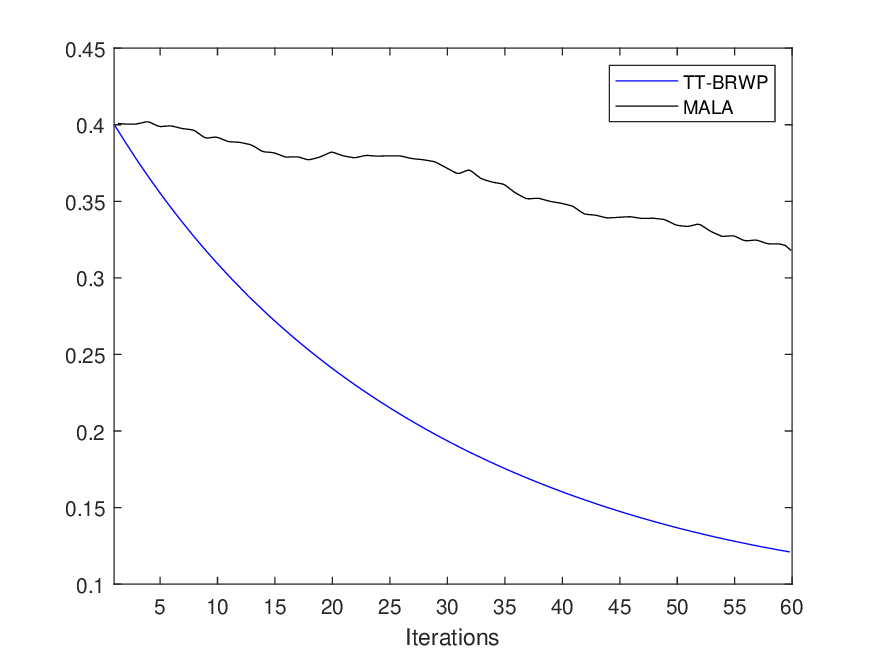}&
\includegraphics[trim = {0cm 0.0cm 0cm 0.0cm},clip,scale = 0.48]{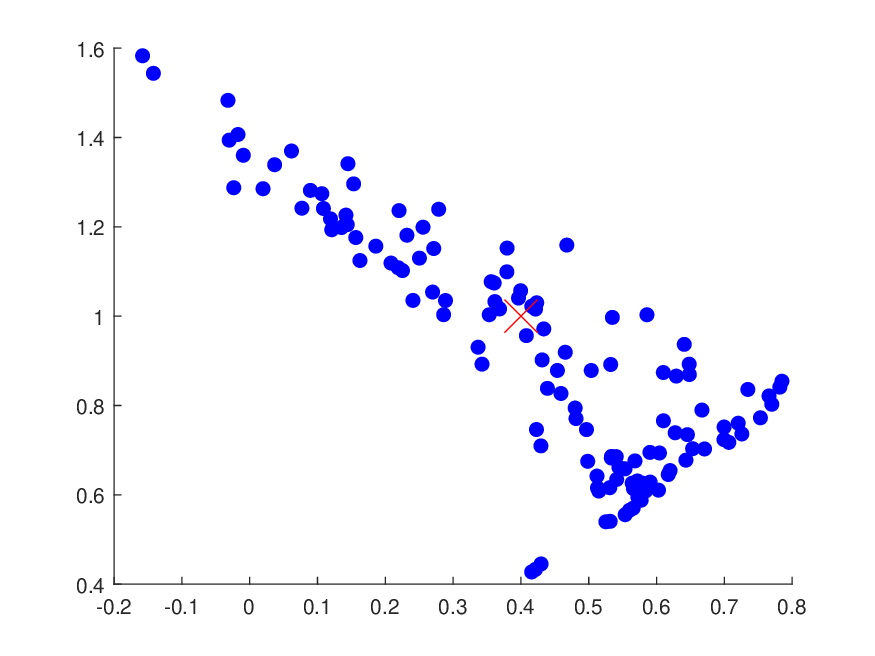}
 \end{tabular}
    \label{fig:wave_source}
\caption{Example 9: Error of the estimation for $\theta$ versus iteration (left) and generated particles from TT-BRWP for $(\theta_3,\theta_4)$ (right) where exact values are $\theta^*_3 = 0.4$, $\theta_4^* = 1$.}
\end{figure}

From Fig.\,\ref{fig:wave_source}, we observe that TT-BRWP converges much faster than MALA to the desired distribution, and the generated particles are distributed near the exact value. We remark that BRWP diverges for this case.

\section{Conclusion.}
In this paper, we proposed a sampling algorithm based on the tensor train approach, aiming to draw samples from potentially high-dimensional and complex distributions. Our method is inspired by a kernel formulation for the regularized Wasserstein proximal operator and employs tensor train approximation for high-dimensional integration. Specifically, by accurately approximating the crucial score function and employing a suitable kernel density approximation, our new sampling algorithm demonstrates superior accuracy, stability, and speed compared to BRWP and Langevin dynamic types sampling algorithms.

\blue{
Compared to the classical ULA and MALA sampling algorithms, the proposed approach offers several attractive features. Firstly, it generates a more structured set of samples due to the diffusion being provided by the score function. Secondly, based on our theoretical analysis and numerical experiments, our method requires fewer iterations to converge. Thirdly, the proposed method demonstrates better stability with larger step sizes because the score function is evaluated at the \( t+T \) time point.}

There are several intriguing avenues for future exploration. From a theoretical standpoint, establishing the accuracy and mixing time of the proposed algorithm for general non-Gaussian target distributions, as verified by our numerical experiments, would be highly attractive. A comprehensive theoretical treatment would enhance the method's applicability to a broader range of practically important scenarios. Moreover, from an algorithmic perspective, exploring the interplay of tensor methods with Wasserstein proximal kernels, investigating the acceleration of MCMC algorithms within the current framework, and exploring related kernel methods for addressing challenging high-dimensional scientific computing problems would be interesting avenues for further research.

\bibliographystyle{siam}
\bibliography{ref.bib}

\end{document}